\theoremstyle{plain}
	\newtheorem{theorem}{Theorem}[section]
	\newtheorem{lemma}[theorem]{Lemma}
	\newtheorem{corollary}[theorem]{Corollary}
	\newtheorem{proposition}[theorem]{Proposition}
	\newtheorem{remark}[theorem]{Remark}
\theoremstyle{plain}
	\newtheorem{maintheorem}{Theorem}
\def\R{\mathbb{R}}
\def\calF{\mathcal{F}}
\def\calZ{\mathcal{Z}}
\def\e{\varepsilon}
\def\6{\partial}
\def\8{\infty}
\def\tu{\tilde{u}}
\def\bw{\bar{w}}
\def\bu{\bar{u}}
\def\bx{\bar{x}}
\def\rddots#1{\cdot^{\cdot^{\cdot^{#1}}}}
\newcommand{\xRightarrow}[2][]{%
\ext@arrow 0055{\Rightarrowfill@}{#1}{#2}%
}
\def\Rightarrowfill@{\arrowfill@\Relbar\Relbar\Rightarrow}
\newcommand{\xLeftarrow}[2][]{%
\ext@arrow 0055{\Leftarrowfill@}{#1}{#2}%
}
\def\Leftarrowfill@{\arrowfill@\Leftarrow\Relbar\Relbar}
\newcommand{\xLongleftrightarrow}[2][]{%
\ext@arrow 0055{\llrafill@}{#1}{#2}%
}
\def\llrafill@{\arrowfill@\Leftarrow\Relbar\Rightarrow}
\begin{document}


\title[A limit equation and bifurcation diagrams]{A limit equation and bifurcation diagrams of\\
semilinear elliptic equations with\\
general supercritical growth}
\thanks{This work was supported by JSPS KAKENHI Grant Number 16K05225.}

\author{Yasuhito Miyamoto}
\address{Graduate School of Mathematical Sciences, The University of Tokyo,
3-8-1 Komaba, Meguro-ku, Tokyo 153-8914, Japan}
\email{miyamoto@ms.u-tokyo.ac.jp}

\begin{abstract}
We study radial solutions of the semilinear elliptic equation
\[
\Delta u+f(u)=0
\]
under rather general growth conditions on $f$.
We construct a radial singular solution and study the intersection number between the singular solution and a regular solution.
An application to bifurcation problems of elliptic Dirichlet problems is given.
To this end, we derive a certain limit equation from the original equation at infinity, using a generalized similarity transformation.
Through a generalized Cole-Hopf transformation, all the limit equations can be reduced into two typical cases, i.e., $\Delta u+u^p=0$ and $\Delta u+e^u=0$.
\end{abstract}
\date{\today}
\subjclass[2010]{Primary: 35J25, 35B32; Secondary: 35J61,34C10}
\keywords{Singular solution, Morse index, Infinitely many positive solutions, Joseph-Lundgren exponent}
\maketitle


\section{Introduction and Main results}
Let $N\ge 3$ and $r:=|x|$.
In this paper we construct a radial singular solution $u^*(r)$ of the elliptic equation
\begin{equation}\label{S1E1}
\Delta u+f(u)=0
\end{equation}
under rather general growth conditions, and study the intersection number of two radial solutions $\calZ_{(0,\8)}[u(\,\cdot\,,\rho)-u^*(\,\cdot\,)]$.
Here, $u(r,\rho)$ is the classical radial solution of (\ref{S1E1}), which satisfies
\begin{equation}\label{S1E2}
\begin{cases}
u''+\frac{N-1}{r}u'+f(u)=0, & r>0,\\
u(0)=\rho,\\
u'(0)=0,
\end{cases}
\end{equation}
and $\calZ_I[u_0(\,\cdot\,)-u_1(\,\cdot\,)]$ denotes the intersection number of the two functions $u_0(r)$ and $u_1(r)$ defined in an interval $I\subset\R$, i.e., $\calZ_I[u_0(\,\cdot\,)-u_1(\,\cdot\,)]=\sharp\{r\in I;\ u_0(r)=u_1(r)\}$.
By a radial singular solution $u^*(r)$ of (\ref{S1E1}) we mean that $u^*(r)$ is a classical solution of the equation
\begin{equation}\label{S1EQ}
u''+\frac{N-1}{r}u'+f(u)=0
\end{equation}
on $(0,r_0)$ for some $r_0>0$ and $\lim_{r\downarrow 0}u^*(r)=\8$.
We give two applications of the intersection number.

By $F(u)$ we define
\[
F(u):=\int_u^{\8}\frac{dt}{f(t)}.
\]
We assume the following:
\begin{multline}\label{f1}
\textrm{One of the following (\ref{f11}) or (\ref{f12}) holds:}\tag{f1}
\end{multline}
\begin{multline}\label{f11}\textrm{(a generalization of $u^p$)}\quad
f(u)\in C^1[0,\infty),\ f(u)>0\ \textrm{for}\ u>0,\ f(0)=0,\\
f(u)\in C^2(u_0,\8)\ \textrm{for some}\ u_0>0,\ 
\lim_{u\downarrow 0}F(u)=\8,\ \textrm{and}\ \lim_{u\to\8}F(u)=0,\tag{f1-1}
\end{multline}
\begin{multline}\label{f12}\textrm{(a generalization of $e^u$)}\quad
f(u)\in C^1(\R),\ f(u)>0\ \textrm{for}\ u\in\R,\\
f(u)\in C^2(u_0,\8)\ \textrm{for some}\ u_0>0,\ \lim_{u\to -\8}F(u)=\8,\ \textrm{and}\ \lim_{u\to\8}F(u)=0.\tag{f1-2}
\end{multline}
\begin{multline}\label{f2}
\textrm{There exists the limit $q:=\lim_{u\to\8}\frac{f'(u)^2}{f(u)f''(u)}$,}\\
\textrm{which is denoted by $q$ throughout the present paper, and this limit is in $(0,\8)$}.\tag{f2}
\end{multline}
Note that the inverse function of $F$, which is denoted by $F^{-1}(u)$, can be defined for $u>0$, because of (\ref{f1}).
We define the growth rate of $f$ by $p:=\lim_{u\to\8}uf'(u)/f(u)$.
By L'Hospital's rule we have
\[
p=\lim_{u\to\8}\frac{u}{f(u)/f'(u)}=\lim_{u\to\8}\frac{1}{1-\frac{f(u)f''(u)}{f'(u)^2}}=\frac{q}{q-1},\ \textrm{and hence}\ \frac{1}{p}+\frac{1}{q}=1.
\]
The exponent $q$ represents the H\"older conjugate of the growth rate of $f$.
We will see in Section~2 that $q\ge 1$.
Let $q_S$ and $q_{JL}$ denote the H\"older conjugates of the critical Sobolev exponent and the so-called Joseph-Lundgren exponent, respectively, i.e.,
\[
q_S:=\frac{N+2}{4}\ \ \textrm{and}\ \ q_{JL}:=\frac{N-2\sqrt{N-1}}{4}.
\]
The exponents $q_S$ and $q_{JL}$ can be formally defined for all $N\ge 1$.
However, $q_S>1$ (resp. $q_{JL}>1$) if and only if $N\ge 3$ (resp. $N\ge 11$).
In Section~2 we give five examples of $f$.
In particular, $q=p/(p-1)$ if $f(u)=u^p$, and $q=1$ if $f(u)=e^u$.
The case $q=1$ includes rapidly growing nonlinearities, e.g.,
\[
\textrm{$\exp(u^p)$ ($p\ge 1$), $\exp(\exp(\cdots\exp(u)\cdots))$, and $\underbrace{(u+2)^{\left[ (u+2)^{\left[\rddots {(u+2)}\right]}\right]}}_{n}$ $(n\ge 2)$.}
\]
When $q>1$, the principal term of $f$ is not necessarily $u^{q/(q-1)}$, e.g., $f(u)=u^{q/(q-1)}(\log (u+1))^{\gamma}$ ($\gamma>0$).

The first main result of the paper is the following:
\begin{maintheorem}\label{THA}
Suppose that $N\ge 3$ and (\ref{f1}) and (\ref{f2}) hold.
Let $u(r,\rho)$ be the solution of (\ref{S1E2}).
Then the following hold:\\
(i) If $q<q_S$, then there is $r_0>0$ such that (\ref{S1EQ}) has a singular solution $u^*(r)\in C^2(0,r_0)$ and
\begin{equation}\label{THAE1}
u^*(r)=F^{-1}[k^{-1}r^2(1+\theta(r))].
\end{equation}
Here, $\theta(r)\in C^2(0,r_0)$, $\theta(r)\to 0$ $(r\downarrow 0)$, and
\begin{equation}\label{THAE1+}
k:=2N-4q.
\end{equation}
(ii) If $q_{JL}<q<q_S$, then for each $r_1\in (0,r_0]$, $\calZ_{(0,r_1)}[u(\,\cdot\,,\rho)-u^*(\,\cdot\,)]\to\8$ as $\rho\to\8$. Here $r_0$ is given in (i).
\end{maintheorem}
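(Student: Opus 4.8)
I would prove the two assertions separately, since they rest on different mechanisms.

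\emph{Part (i).} The first step is the substitution $v:=F(u)$. As $F'=-1/f$, one has $(F^{-1})'(v)=-f(F^{-1}(v))$, and a direct computation turns \eref{S1EQ} into $v''+\frac{N-1}{r}v'=1+f'(F^{-1}(v))(v')^2$. Writing $Q(v):=v\,f'(F^{-1}(v))$, so that $f'(F^{-1}(v))=Q(v)/v$, the one analytic fact I need is
\begin{equation*}
\lim_{v\downarrow 0}Q(v)=q,
\end{equation*}
which follows from \eref{f2} by L'Hospital's rule applied to $Q(v)=f'(g)\int_g^{\8}dt/f(t)$ with $g=F^{-1}(v)\to\8$ (using also that $f'(g)\to\8$). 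Substituting the ansatz $v=k^{-1}r^2(1+\theta(r))$ with $k=2N-4q$ and collecting terms, \eref{S1EQ} becomes
\begin{multline*}
r^2\theta''+(N+3-4q)\,r\theta'+k\,\theta\\
=4\bigl(Q(v)-q\bigr)(1+\theta)+4\bigl(Q(v)-q\bigr)r\theta'+Q(v)\frac{(r\theta')^2}{1+\theta}
\end{multline*}
with $v=k^{-1}r^2(1+\theta)$. The balance of the $r$-independent parts is precisely what forces $k=2N-4q$, and $q<q_S$ gives $k>N-2>0$, so $v>0$ for $|\theta|<1$ and $F^{-1}(v)$ is well defined. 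In the variable $-\log r$ the Euler operator $\calL\theta:=r^2\theta''+(N+3-4q)r\theta'+k\theta$ becomes constant-coefficient with both characteristic roots of positive real part (here $q<q_S$ is used again), hence has no solution bounded as $r\downarrow 0$; inverting it by variation of parameters gives a bounded right inverse that maps $o(1)$ data to $o(1)$ functions. I would then solve the displayed equation for $\theta$ by a contraction on a small ball of a weighted space controlling $\theta$ together with $r\theta'$, over an interval $(0,r_0)$ with $r_0$ small: the right-hand side is $o(1)$ as $r\downarrow 0$ and has small Lipschitz constant there, the factors $Q(v)-q$ being uniformly small on the ball and the last term genuinely quadratic in $r\theta'$. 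A bootstrap then gives $\theta\in C^2(0,r_0)$ with $\theta(r)\to 0$, so that $u^*:=F^{-1}[k^{-1}r^2(1+\theta)]$ is a singular solution of \eref{S1EQ} of the form \eref{THAE1}.

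\emph{Part (ii).} Assume now $q_{JL}<q<q_S$. Following the title, the plan is to rescale by the generalized similarity transformation so that, as $\rho\to\8$, the equation degenerates to a limit equation which the generalized Cole--Hopf transformation converts into $\Delta w+w^p=0$ (when $q>1$, with $p=q/(q-1)$) or $\Delta w+e^w=0$ (when $q=1$). Let $T_\rho$ denote the composition of a domain rescaling by some $\lambda(\rho)\to 0$ with this (monotone) change of dependent variable, and set $U_\rho:=T_\rho(u(\,\cdot\,,\rho))$, $U^*_\rho:=T_\rho(u^*)$. Since $T_\rho$ rescales the domain and transforms $u$ pointwise by a monotone map, it does not change the number of intersections, and relabelling gives $\calZ_{(0,r_1)}[u(\,\cdot\,,\rho)-u^*(\,\cdot\,)]=\calZ_{(0,\,r_1/\lambda(\rho))}[U_\rho-U^*_\rho]$.

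Two facts finish the proof. First, as $\rho\to\8$ one has $U_\rho\to U$ and $U^*_\rho\to U^*$ in $C^1_{\mathrm{loc}}(0,\8)$, where $U$ and $U^*$ are the regular and the singular solution of the limit equation; this requires showing that the rescaled equation converges to the limit equation uniformly on compact subsets of $(0,\8)$ — with a priori bounds preventing degeneration — and that the rescaled initial data converge, which is where \eref{f2} and the asymptotics of $F$ re-enter. Second, for the limit equation with $q_{JL}<q<q_S$ it is classical (Joseph--Lundgren, together with the corresponding statement for $\Delta w+e^w=0$ when $3\le N\le 9$) that $\calZ_{(0,\8)}[U-U^*]=\8$, the intersections accumulating at $r=\8$: in Emden--Fowler variables the relevant equilibrium is a focus exactly when $(N+2-4q)^2<4(2N-4q)$, i.e.\ $q_{JL}<q<\frac{N+2\sqrt{N-1}}{4}$, and since $(q_{JL},q_S)$ lies in this interval for $N\ge 3$, our hypothesis makes $U$ spiral into $U^*$ as $r\to\8$ (all such intersections being simple, by uniqueness for the second-order ODE). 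To conclude, fix $r_1\in(0,r_0]$ and $M\in\N$, and pick $R_M$ with $\calZ_{(0,R_M)}[U-U^*]\ge M$. By the first fact and persistence of transversal zeros under $C^1$-small perturbations, $U_\rho-U^*_\rho$ has at least $M$ zeros in $(0,R_M]$ once $\rho$ is large; since $\lambda(\rho)\to 0$, we have $\lambda(\rho)R_M<r_1$ for $\rho$ large, so these correspond to at least $M$ zeros of $u(\,\cdot\,,\rho)-u^*(\,\cdot\,)$ in $(0,r_1)$. As $M$ is arbitrary, $\calZ_{(0,r_1)}[u(\,\cdot\,,\rho)-u^*(\,\cdot\,)]\to\8$.

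The hardest step is the convergence asserted in the first fact of Part~(ii): one must make the limit equation genuinely govern the rescaled solutions, uniformly on compact $r$-intervals, while the rescaling parameter $\lambda(\rho)$ degenerates — this is exactly where the generalized similarity and Cole--Hopf transformations must be used quantitatively, supported by a priori estimates on $u(\,\cdot\,,\rho)$ near $r=0$. In Part~(i) the analogous obstacle is milder: choosing the weighted norm in which $\calL$ is boundedly invertible and in which the term $\bigl(Q(v)-q\bigr)(1+\theta)$ is a small perturbation.
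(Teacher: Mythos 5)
Your Part (i) is a correct reformulation of the paper's contraction argument. The paper sets $t=\log r$, $x(t)=\theta(r)$, writes the resulting nonlinear ODE as a first-order system $\dot\xi=-A\xi+G(\xi,t)$ with $A$ having eigenvalues $\frac{(N+2-4q)\pm\sqrt{(N+2-4q)^2-4(2N-4q)}}{2}$, and runs the contraction on the Duhamel map $\calF(\xi)=\int_{-\8}^t e^{-(t-\tau)A}G(\xi(\tau),\tau)\,d\tau$. Your Euler-operator inversion is the same mechanism in disguise: the characteristic roots, the use of $q<q_S$ to guarantee $N+2-4q>0$ and $k=2N-4q>N-2>0$, and the smallness of $Q(v)-q$ near $v=0$ all match. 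The algebraic identity you derive for $\theta$ is correct, and the conclusion that $F(u)f'(u)\to q$ (via L'Hospital, using $f'\to\8$) is exactly the paper's \eref{L1E3+}. So Part (i) is essentially complete.

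In Part (ii) the strategy (rescale, pass to the limit equation, transfer the Joseph--Lundgren intersection count, then persist transversal zeros) is exactly the paper's, and your phase-plane criterion $(N+2-4q)^2<4(2N-4q)$ and the inclusion $(q_{JL},q_S)\subset(q_{JL},\frac{N+2\sqrt{N-1}}{4})$ are correct. But the step you flag as ``the hardest'' — that $U_\rho\to U$ and $U^*_\rho\to U^*$ in $C^1_{loc}$ — is left entirely open, and it is a genuine gap, not a routine continuous-dependence remark. The rescaled equation for $\tu(s):=F^{-1}[\lambda^{-2}F[u(\lambda s,\rho)]]$ has the coefficient $\frac{F(u(\lambda s,\rho))f'(u(\lambda s,\rho))-F(\tu)f'(\tu)}{F(\tu)f(\tu)}$ in front of $(\tu')^2$, and for this to converge to the coefficient of \eref{LE} one needs $u(\lambda s,\rho)\to\8$ \emph{uniformly} on compact $s$-intervals as $\rho\to\8$. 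That is not automatic from \eref{f2} and ``the asymptotics of $F$''; it is a statement about how quickly $u(\cdot,\rho)$ can fall from $\rho$ at $r=0$, uniformly in $\rho$. The paper supplies precisely this with Proposition~\ref{S4P1} (Lin's Theorem 2.4 in \cite{L94}): applied to the shifted nonlinearity $f_M(u)=f(u+M)$, the supercriticality hypothesis \eref{S4P1E1} (which holds because $\lim_{u\to\8}uf(u)/F_0(u)=1+q/(q-1)>1+p_S$ when $q>1$, and $=\8$ when $q=1$) gives a lower bound $R_0>0$ on the first zero of $u(\cdot,\rho)-M$ uniformly for all large $\rho$, hence $u(\lambda s,\rho)\ge M$ on $[0,s_0]$ once $\rho$ is large. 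Only then does the coefficient converge uniformly and ODE continuous dependence give $\tu\to v(\cdot,1)$ in $C_{loc}[0,\8)$; the singular part $\tu^*\to v^*$ in $C_{loc}(0,\8)$ follows directly from $\theta(\lambda s)\to 0$. Without the uniform a priori bound your argument does not close, because nothing else prevents $u(\lambda s,\rho)$ from dipping to moderate values (where $F(u)f'(u)$ may be far from $q$) on a fixed compact $s$-interval as $\rho\to\8$.
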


In order to prove Theorem~\ref{THA} we use the following generalized similarity transformation of elliptic type:
\begin{equation}\label{S1E3}
v(s,\sigma)=F^{-1}[\lambda^{-2}F[u(r,\rho)]]\ \ \textrm{and}\ \ s:=\frac{r}{\lambda},
\end{equation}
where $\sigma:=F^{-1}[\lambda^{-2}F(\rho)]$.
The parabolic version of (\ref{S1E3}) was introduced by Fujishima~\cite[the equation (11)]{F14}.
We show in Section~4 that a certain limit equation for $v$ becomes
\begin{equation}\label{LE}
v''+\frac{N-1}{s}v'+f(v)+\frac{q-F(v)f'(v)}{F(v)f(v)}v'^2=0
\end{equation}
when $N\ge 3$ and $q<q_S$.
If $v(s)$ satisfies (\ref{LE}), then $F^{-1}[\lambda^{-2} F(v(\lambda s))]$, $\lambda>0$, also satisfies (\ref{LE}).
The equation (\ref{LE}) has the exact singular solution
\begin{equation}\label{SSV}
v^*(s):=F^{-1}[k^{-1}s^2].
\end{equation}
Let $v(s,\sigma)$ denote the solution of the problem
\begin{equation}\label{LP}
\begin{cases}
v''+\frac{N-1}{s}v'+f(v)+\frac{q-F(v)f'(v)}{F(v)f(v)}v'^2=0, & s>0,\\
v(0)=\sigma,\\
v'(0)=0.
\end{cases}
\end{equation}
A large solution of (\ref{S1EQ}) is approximated by a solution of (\ref{LE}).
Thus, it is important to study the intersection number of two solutions of (\ref{LE}).
The second main result is the following:
\begin{maintheorem}\label{THB}
Suppose that $N\ge 3$ and (\ref{f1}) and (\ref{f2}) hold.
Let $0<\sigma_0<\sigma_1$.
Let $v(s,\sigma_i)$, $i=0,1$, be solutions of (\ref{LP}) with $\sigma=\sigma_i$, and let $v^*(s)$ be the singular solution given by (\ref{SSV}).
Then the following hold:\\
(i) If $q=q_S$, then $\calZ_{(0,\8)}[v(\,\cdot\,,\sigma_0)-v(\,\cdot\,,\sigma_1)]=1$ and $\calZ_{(0,\8)}[v(\,\cdot\,,\sigma_0)-v^*(\,\cdot\,)]=2$.\\
(ii) If $q_{JL}<q<q_S$, then $\calZ_{(0,\8)}[v(\,\cdot\,,\sigma_0)-v(\,\cdot\,,\sigma_1)]=\8$ and $\calZ_{(0,\8)}[v(\,\cdot\,,\sigma_0)-v^*(\,\cdot\,)]=\8$.\\
(iii) If $q\le q_{JL}$, then $\calZ_{(0,\8)}[v(\,\cdot\,,\sigma_0)-v(\,\cdot\,,\sigma_1)]=0$ and $\calZ_{(0,\8)}[v(\,\cdot\,,\sigma_0)-v^*(\,\cdot\,)]=0$.\\
In particular, if $3\le N\le 9$, then $q_{JL}<1$, and hence (iii) is vacuous, because $q\ge 1$.
\end{maintheorem}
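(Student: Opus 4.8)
The plan is to transform the limit equation \eref{LE} into one of the two model equations $\Delta w+w^p=0$ or $\Delta w+e^w=0$ via the generalized Cole--Hopf change of variables announced in the abstract, so that the intersection-number computations reduce to the classical ones. Concretely, set $w:=G(v)$ for a suitable increasing function $G$ (built from $F$ and the exponent $q$) chosen so that the nonlinear first-order term $\frac{q-F(v)f'(v)}{F(v)f(v)}v'^2$ is absorbed. When $q>1$ one expects $w=c\,F(v)^{-1/(q-1)}$ (up to normalization), which turns \eref{LE} into $\Delta w+w^{q/(q-1)}=0$, i.e. the Lane--Emden equation with Sobolev-subcritical, -critical or -supercritical exponent according to whether $q<q_S$, $q=q_S$ or $q>q_S$; when $q=1$ one takes $w=-\log F(v)$ (again up to constants) and obtains $\Delta w+e^w=0$. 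The singular solution $v^*$ in \eref{SSV} corresponds under $G$ exactly to the explicit singular solution of the model equation, and a regular solution $v(\cdot,\sigma)$ of \eref{LP} corresponds to a regular solution $w(\cdot,\alpha)$ of the model problem with $\alpha=G(\sigma)$; since $G$ is a homeomorphism of the relevant range of values, $\calZ_{(0,\infty)}[v(\cdot,\sigma_0)-v(\cdot,\sigma_1)]=\calZ_{(0,\infty)}[w(\cdot,\alpha_0)-w(\cdot,\alpha_1)]$ and likewise with $v^*$ replaced by $w^*$.

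Granting this reduction, each assertion follows from well-known facts about the model equations. For the Lane--Emden equation $\Delta w+w^p=0$: when $p$ is the critical Sobolev exponent the regular solutions are the explicit Aubin--Talenti bubbles, any two of which intersect exactly once, and each meets the singular solution $w^*(s)=c\,s^{-2/(p-1)}$ exactly twice — giving (i); when $p$ is between the Sobolev and Joseph--Lundgren exponents the regular solutions oscillate infinitely often around $w^*$ (the linearization at $w^*$ has complex characteristic roots), so both intersection numbers are infinite — giving (ii); and when $p$ is at or above the Joseph--Lundgren exponent the regular solutions are ordered and lie strictly below $w^*$, so both intersection numbers are zero — giving (iii). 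The same trichotomy for $\Delta w+e^w=0$ in dimensions $N\ge 10$ (where $q=1\le q_{JL}$) yields the $q=1$ part of (iii); and the final sentence is the observation that $q_{JL}<1$ precisely when $N\le 9$, so for $3\le N\le 9$ case~(iii) never applies because $q\ge 1$ by the discussion in Section~2.

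In organizing the write-up I would first record the Cole--Hopf substitution as a lemma: verify by direct computation that $w=G(v)$ solves the model equation, that $G$ maps $(0,\infty)$ (resp. $\R$) monotonically onto the range of admissible values of $w$, that $G$ sends regular data to regular data and the singular profile \eref{SSV} to the model singular profile, and that $v'$ and $w'$ vanish simultaneously so no spurious zeros of $v-v^*$ are created or destroyed. Then state precisely the classical intersection-number results being invoked, with references, for each of the three exponent ranges and for the exponential equation. The main obstacle is the first step: one must check that hypotheses \eref{f1} and \eref{f2} — in particular the existence of the limit $q$ and the behavior of $F$ at the endpoints — really do guarantee that $G$ is a well-defined $C^2$ diffeomorphism onto the full half-line (or line) of $w$-values, so that the correspondence of solutions is exact rather than merely asymptotic; the degenerate boundary behavior of $F$ near $u=0$ (in case \eref{f11}) or $u=-\infty$ (in case \eref{f12}) has to be handled with care, but the asymptotic relations $1/p+1/q=1$ and $q=\lim f'^2/(ff'')$ derived above are exactly what make the coefficient in \eref{LE} the derivative of $\log G$, so the computation closes.
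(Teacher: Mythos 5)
Your proposal follows the paper's proof exactly: apply the generalized Cole--Hopf map $w=F_q^{-1}[F[v]]$ to turn \eref{LE} into the model equation $\Delta w+w^p=0$ (if $q>1$) or $\Delta w+e^w=0$ (if $q=1$), check that regular data and the singular profile \eref{SSV} go over to their model counterparts, and then read off the trichotomy from the classical Joseph--Lundgren/Wang intersection-number results. Two small points: since $F_q[w]=\frac{1}{p-1}w^{-(p-1)}$ with $p-1=\frac{1}{q-1}$, the substitution is $w=c\,F(v)^{-(q-1)}$ rather than $c\,F(v)^{-1/(q-1)}$, and your worry about the correspondence being ``merely asymptotic'' does not arise here --- the Cole--Hopf map takes \eref{LE} to the model equation exactly (that is the whole point of the limit equation), with \eref{f1} guaranteeing that $F$ and $F_q$ are monotone bijections onto $(0,\infty)$ so that $F_q^{-1}\circ F$ is a genuine diffeomorphism and intersection numbers are preserved.
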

When $q=q_S$, $v(s,\sigma)$ can be written explicitly as follows:
\[
v(s,\sigma)=F^{-1}\left[F(\sigma)\left(1+\frac{s^2}{4NF(1)}\right)^2\right].
\]
For the case of a quasilinear elliptic equation with power nonlinearity, see \cite{MT16}.
\begin{remark}\label{R1}
When $q=1$, the condition $q_{JL}<q<q_S$ corresponds to $3\le N\le 9$.
\end{remark}

As we will see in Subsections~2.1 and 2.2, Theorem~\ref{THB} is a generalization of the well-known result about intersection numbers for the case $f(u)=u^p$ or $e^u$.
In this paper two applications of Theorems~\ref{THA} and \ref{THB} are given in Corollaries~\ref{S1C0} and \ref{S1C1} below.\\

The first application is about the Morse index of a singular solution.
When (\ref{f1}) and (\ref{f2}) hold, (\ref{S1EQ}) has a singular solution $u^*(r)$ given by Theorem~\ref{THA}.
The solution $u^*(r)$ is defined near the origin.
We extend the domain of $u^*(r)$.
We assume that $u^*(r)$ has a first positive zero $r^*_0>0$, i.e., $u^*(r)>0$ for $0<r<r^*_0$ and $u^*(r^*_0)=0$.
For example, if $f(u)>\delta>0$ for $u\ge 0$, then $u^*(r)$ has the first positive zero.
The function $u^*(r)$ is a singular solution of the Dirichlet problem
\begin{equation}\label{DP}
\begin{cases}
\Delta u+f(u)=0, & \textrm{in}\ B(r^*_0),\\
u=0, & \textrm{on}\ \partial B(r^*_0),\\
u>0, & \textrm{in}\ B(r^*_0),
\end{cases}
\end{equation}
where $B(r^*_0)\subset\R^N$ is a ball with radius $r^*_0$.
The Morse index of $u^*(r)$ is defined by
\[
m(u^*):=\sup\left\{\dim X;\ \textrm{$X$ is a subspace of $H^1_{0,rad}(B(r^*_0))$},\ H[\phi]<0\ \textrm{for all $\phi\in X\backslash\{0\}$}\right\},
\]
where $H^1_{0,rad}(B(r^*_0)):=\{u(x)\in H^1(B(r^*_0));\ u(x)=u(|x|),\ u(r^*_0)=0\}$ and
\[
H[\phi]:=\int_{B(r^*_0)}\left(|\nabla\phi|^2-f'(u^*)\phi^2\right)dx.
\]
\begin{corollary}[Morse index of the singular solution]\label{S1C0}
Suppose that $N\ge 3$ and (\ref{f1}) and (\ref{f2}) hold.
Let $u^*(r)$ be the singular solution of (\ref{DP}) constructed as above.
Then the following hold:\\
(i) If $q_{JL}<q<q_S$, then $m(u^*)=\8$.\\
(ii) If $q<q_{JL}$, then $m(u^*)<\8$.
\end{corollary}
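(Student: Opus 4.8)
The plan is to reduce the computation of $m(u^*)$ to the spectral theory of a Hardy-type operator near the origin, the decisive quantity being $\lim_{r\downarrow0}r^2f'(u^*(r))$. First I would establish
\begin{equation}\label{S1C0E1}
r^2f'(u^*(r))\longrightarrow qk=2q(N-2q)\qquad(r\downarrow0).
\end{equation}
Indeed, by Theorem~\ref{THA}(i) we have $F(u^*(r))=k^{-1}r^2(1+\theta(r))$ with $\theta(r)\to0$ and $u^*(r)\to\8$ as $r\downarrow0$, while L'Hospital's rule and (\ref{f2}) give $\lim_{u\to\8}F(u)f'(u)=\lim_{u\to\8}\frac{F(u)}{1/f'(u)}=\lim_{u\to\8}\frac{f'(u)^2}{f(u)f''(u)}=q$; here $F(u)\to0$ and $f'(u)\to\8$, the latter since $f''>0$ for large $u$ by (\ref{f2}) while $f$ grows superlinearly by (\ref{f1}). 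Multiplying these asymptotics gives $f'(u^*(r))F(u^*(r))\to q$, which is \eref{S1C0E1} because $k=2(N-2q)>0$ (note $q<q_S<N/2$). An elementary computation gives $2q(N-2q)-\tfrac{(N-2)^2}{4}=-4(q-q_{JL})\bigl(q-\tfrac{N+2\sqrt{N-1}}{4}\bigr)$, which is positive exactly for $q_{JL}<q<\tfrac{N+2\sqrt{N-1}}{4}$ and negative for $0<q<q_{JL}$; since $q_S<\tfrac{N+2\sqrt{N-1}}{4}$, case (i) yields $qk>\tfrac{(N-2)^2}{4}$ and case (ii) yields $0<qk<\tfrac{(N-2)^2}{4}$.

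For (i), fix $C_0\in\bigl(\tfrac{(N-2)^2}{4},\,qk\bigr)$ and, by \eref{S1C0E1}, choose $\delta>0$ so small that $f'(u^*(r))>C_0r^{-2}$ for $0<r<\delta$. Put $\beta:=\sqrt{C_0-\tfrac{(N-2)^2}{4}}>0$; the radial function $\psi(r):=r^{-(N-2)/2}\sin(\beta\ln r)$ solves $\psi''+\tfrac{N-1}{r}\psi'+\tfrac{C_0}{r^2}\psi=0$ on $(0,\8)$ and vanishes exactly at $r=e^{j\pi/\beta}$, $j\in\Z$, these zeros accumulating at $0$. For a finite set $S$ of sufficiently negative integers let $\phi_j$ ($j\in S$) be $\psi$ restricted to the interval between its $j$-th and $(j+1)$-th zeros and extended by $0$; then $\phi_j\in H^1_{0,rad}(B(r^*_0))$, and integration by parts (using the ODE and $\phi_j=0$ at the endpoints) gives $\int_{B(r^*_0)}\bigl(|\nabla\phi_j|^2-C_0|x|^{-2}\phi_j^2\bigr)dx=0$, whence $H[\phi_j]<0$ because $f'(u^*)>C_0|x|^{-2}$ on the support of $\phi_j$. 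Since the $\phi_j$ have pairwise disjoint supports, $H$ is negative definite on the $(\sharp S)$-dimensional subspace they span; letting $\sharp S\to\8$ gives $m(u^*)=\8$.

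For (ii), fix $C_1\in\bigl(qk,\,\tfrac{(N-2)^2}{4}\bigr)$ and, by \eref{S1C0E1}, $\delta>0$ with $f'(u^*(r))\le C_1r^{-2}$ on $(0,\delta]$. Since $u^*$ is bounded on $[\delta,r^*_0]$, there is $M>0$ with $f'(u^*(r))\le C_1r^{-2}+M$ for all $r\in(0,r^*_0]$, hence $H[\phi]\ge\int_{B(r^*_0)}\bigl(|\nabla\phi|^2-C_1|x|^{-2}\phi^2-M\phi^2\bigr)dx=:a[\phi]$ for every $\phi\in H^1_{0,rad}(B(r^*_0))$. As $C_1<\tfrac{(N-2)^2}{4}$, Hardy's inequality makes $C_1|x|^{-2}$ form-bounded relative to $-\Delta$ with relative bound $4C_1/(N-2)^2<1$, so $a$ is bounded below with form domain $H^1_{0,rad}(B(r^*_0))$ and, by Rellich's theorem, its associated self-adjoint operator has compact resolvent; thus $a$ has only finitely many negative eigenvalues, say $n_0$. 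The same two-sided bound on $f'(u^*)$ shows $H$ likewise defines a self-adjoint operator with compact resolvent, whose number of negative eigenvalues equals $m(u^*)$; by the min-max principle this number is at most that of $a$, so $m(u^*)\le n_0<\8$.

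The main obstacle is the first step — proving \eref{S1C0E1}, which forces one to extract from (\ref{f2}) the asymptotic identity $F(u)f'(u)\to q$ and to control the remainder $\theta(r)$ supplied by Theorem~\ref{THA}. Once \eref{S1C0E1} is in hand, the dichotomy in the corollary is dictated entirely by whether the limit $qk$ exceeds the Hardy constant $(N-2)^2/4$ — equivalently, by the position of $q$ relative to $q_{JL}$ — and the remaining spectral arguments are routine.
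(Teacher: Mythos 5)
Your proof is correct, but part (i) takes a genuinely different route from the paper. The paper's proof of (i) relies on Theorem~\ref{THA}~(ii): it picks $\rho$ large so that $u^*(\,\cdot\,)-u(\,\cdot\,,\rho)$ has at least $2n+1$ zeros in a region where $f$ is convex, and builds $n$ disjointly supported test functions from the difference $u^*-u(\,\cdot\,,\rho)$ on the intervals where $u(\,\cdot\,,\rho)<u^*$; the convexity $f''>0$ (valid for large $u$ by (\ref{f2})) gives $\frac{f(u^*)-f(u)}{u^*-u}<f'(u^*)$, which makes $H$ negative on each. You instead isolate the sharp asymptotic $r^2f'(u^*(r))\to qk$, identify the Hardy threshold via the quadratic $2q(N-2q)-(N-2)^2/4$ whose roots are exactly $q_{JL}$ and $\frac{N+2\sqrt{N-1}}{4}$, and manufacture test functions directly from the oscillating Euler solution $r^{-(N-2)/2}\sin(\beta\ln r)$. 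Your version is more self-contained --- it uses only Theorem~\ref{THA}~(i) and never invokes the intersection number or the convexity of $f$ --- whereas the paper's version exposes the conceptual link between infinite Morse index and infinite intersection number, which is a running theme of the paper. For part (ii) the two arguments are essentially the same: both extract $f'(u^*(r))\le C_1r^{-2}$ near $r=0$ with $C_1<(N-2)^2/4$ and apply Hardy's inequality; the paper splits the quadratic form with a smooth cutoff $\chi_0+\chi_1=1$ and reduces to a bounded-potential operator, while you shift by a constant $M$ and appeal to form-boundedness plus compact resolvent --- a minor technical variant. Both are sound.
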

If $3\le N\le 9$, then (ii) does not occur.
We need a more explicit description of $\theta(r)$ in (\ref{THAE1}) to determine the case $q=q_{JL}$.
The intersection number is used in the proof of (i).
Let $p_S$ and $p_{JL}$ be the critical Sobolev exponent and the Joseph-Lundgren exponent, respectively, i.e.,
\[
p_S:=
\begin{cases}
\frac{N+2}{N-2} & \textrm{if}\ N\ge 3,\\
\8 & \textrm{if}\ N=2
\end{cases}
\quad\textrm{and}\quad
p_{JL}:=
\begin{cases}
1+\frac{4}{N-4-2\sqrt{N-1}} & \textrm{if}\ N\ge 11,\\
\infty & \textrm{if}\ 2\le N\le 10.
\end{cases}
\]
When $f(u)=u^p+c_0u$ with $p>p_S$, a singular solution $u^*$ of (\ref{DP}) was constructed by Merle-Peletier~\cite{MP91}.
Guo-Wei~\cite{GW11} studied the Morse index of this singular solution $u^*$.
In \cite{GW11} it was shown that if $p_S<p<p_{JL}$ (resp. $p\ge p_{JL}$), then $m(u^*)=\8$ (resp. $m(u^*)<\8$).
Note that their result includes the case $p=p_{JL}$ which corresponds to the case $q=q_{JL}$.
In author's previous papers~\cite{Mi14,Mi15} singular solutions of (\ref{DP}) were constructed for the case $f(u)=u^p+g_0(u)$ and $f(u)=e^u+g_1(u)$, where $g_0$ and $g_1$ are lower order terms.
Moreover, Corollary~\ref{S1C0} was proved for these two cases.
When $f(u)=\exp(u^p)$ ($p>0$), Kikuchi-Wei~\cite{KW16} constructed a singular solution $u^*$ of (\ref{DP}) and showed that $m(u^*)<\8$ if $N\ge 11$.

We have to mention the uniqueness of a radial singular solution in the supercritical case.
The uniqueness was proved by several authors. See \cite{CCCT10,FF15,HKW13,LL10,MN16,WCCK14}.
However, the equations treated by them are $\Delta u+a_0(r)u^p+g_0(u,r)=0$ and $\Delta_m u+a_1(r)u^p+g_1(u,r)=0$ ($1<m\le 2$), where $\Delta_m$ denotes the $m$-Laplace operator and $g_0$ and $g_1$ are lower order terms.
For other supercritical nonlinearities the uniqueness problem remains open.\\

The second application is a bifurcation problem.
Let $B\subset\R^N$ denote the unit ball.
We consider the problem
\begin{equation}\label{BP}
\begin{cases}
\Delta U+\mu f(U)=0, & \textrm{in}\ B,\\
U=0, & \textrm{on}\ \partial B,\\
U>0, & \textrm{in}\ B,
\end{cases}
\end{equation}
where $\mu>0$.
We assume the following:
\begin{equation}\label{f1'}
f\in C^1[0,\8)\cap C^2(1,\8),\ \ f(u)>0\ \textrm{for}\ u\ge 0,\ \ \textrm{and}\ \ \lim_{u\to\8}F(u)=0.\tag{f1'}
\end{equation}
When (\ref{f1'}) holds, the domain of $f$ can be extended to $\R$ such that (\ref{f12}) holds.
See Section~8 for details.
By the symmetry result of Gidas-Ni-Nirenburg~\cite{GNN79} every classical solution of (\ref{BP}) is radial.
Hence, (\ref{BP}) can be reduced to an ODE.
It is well known that the solution set of (\ref{BP}) can be described as $\{(\mu(\rho),U(R,\rho))\}$ and $\rho:=\left\|U\right\|_{L^{\infty}(B)}$ and that $\mu(0)=0$.
Hence, the solution set is a curve emanating from $(\lambda,U)=(0,0)$.
See \cite{K97} for example.
\begin{corollary}[Bifurcation diagram]\label{S1C1}
Suppose that $N\ge 3$, $q<q_S$, and (\ref{f1'}) and (\ref{f2}) hold.
Then, (\ref{BP}) has a singular solution $(\mu^*,U^*)$ and the following hold:\\
(i) If $q_{JL}<q<q_S$, then the curve $\{(\mu(\rho),U(R,\rho));\ \rho>0\}$ has infinitely many turning points around $\mu^*$.
In particular, (\ref{BP}) has infinitely many classical solutions for $\mu=\mu^*$.\\
(ii) If $q\le q_{JL}$, $f''(u)>0$ for $u\ge 0$, and $q\le F(u)f'(u)\le (N-2)^2/(8(N-2q))$ for $u\ge 0$, then $\mu(\rho)$ is strictly increasing, and hence it has no turning point.
The curve can be parametrized by $\mu$ and it blows up at some $\bar{\mu}>0$.
Therefore, (\ref{BP}) has a unique classical solution for each $0<\mu<\bar{\mu}$.
\end{corollary}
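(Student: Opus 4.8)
The plan is to turn \eref{BP} into the radial shooting problem \eref{S1E2} by scaling, and then to read the global shape of the solution curve off the intersection numbers of Theorems~\ref{THA} and~\ref{THB}. Since $f>0$ on $[0,\rho]$ (indeed $f\ge\delta>0$ on $[0,\8)$ under \eref{f1'}), the solution $u(\,\cdot\,,\rho)$ of \eref{S1E2} is strictly decreasing for $r>0$ and has a first zero $z(\rho)>0$ with $u'(z(\rho),\rho)<0$; then $U(R,\rho):=u(z(\rho)R,\rho)$ solves \eref{BP} with $\mu=\mu(\rho):=z(\rho)^2$ and $\|U(\,\cdot\,,\rho)\|_{L^\infty(B)}=\rho$, and by Gidas--Ni--Nirenberg every classical solution arises this way, so the solution set of \eref{BP} is exactly $\{(\mu(\rho),\rho);\ \rho>0\}$, with $\mu(0^+)=0$, and a turning point is precisely a critical point of $\rho\mapsto\mu(\rho)$. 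Under \eref{f1'} the singular solution $u^*$ of Theorem~\ref{THA}(i) has a first positive zero $r^*_0$ (cf.\ the construction before Corollary~\ref{S1C0}), and $(\mu^*,U^*):=\bigl((r^*_0)^2,\,u^*(r^*_0\,\cdot\,)\bigr)$ is the singular solution of \eref{BP}. Throughout I use that $u(\,\cdot\,,\rho)$ and $u^*$ are a classical and a singular solution of the same second order ODE, so $u(\,\cdot\,,\rho)\not\equiv u^*$ and $u(\,\cdot\,,\rho)-u^*$ has only simple zeros; hence all their intersections are transversal and no pair of interior intersections is created or destroyed as $\rho$ varies.

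\textbf{Part (i).} Fix $r_1\in(0,r_0)$ with $r_1<r^*_0$. I would first establish $z(\rho)\to r^*_0$ as $\rho\to\8$, which rests on the rescaled regular solutions converging to $u^*$ locally uniformly on $(0,\8)$ (a Pohozaev-type a priori bound plus the singular-limit analysis already behind Theorem~\ref{THA}); I expect this convergence to be the main technical point of (i). Granting it, for $\rho$ large $r_1<\min(z(\rho),r^*_0)$, so
\[
N(\rho):=\calZ_{(0,\min(z(\rho),r^*_0))}[u(\,\cdot\,,\rho)-u^*]\ \ge\ \calZ_{(0,r_1)}[u(\,\cdot\,,\rho)-u^*]\ \longrightarrow\ \8\quad(\rho\to\8)
\]
by Theorem~\ref{THA}(ii). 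By the transversality remark, $N(\rho)$ can change only when an intersection passes through the right endpoint $\min(z(\rho),r^*_0)$; a zero of $u(\,\cdot\,,\rho)-u^*$ there would force $u(z(\rho),\rho)=u^*(z(\rho))$ when $z(\rho)\le r^*_0$ and $u(r^*_0,\rho)=u^*(r^*_0)=0$ when $z(\rho)\ge r^*_0$, which, since $u(z(\rho),\rho)=0<u^*(z(\rho))$ for $z(\rho)<r^*_0$ and $u(r^*_0,\rho)>0$ for $z(\rho)>r^*_0$, happens only when $z(\rho)=r^*_0$, i.e.\ only at a turning point. Thus $N(\rho)$ is constant on every $\rho$-interval free of turning points, and $N(\rho)\to\8$ forces infinitely many turning points, accumulating at $\rho=\8$ and --- since $\mu(\rho)\to\mu^*$ --- at $\mu^*$. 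Finally $u(\,\cdot\,,\rho)-u^*<0$ near $r=0$ (as $u^*\uparrow\8$) and its sign at the right endpoint is that of $r^*_0-z(\rho)$, so the parity of $N(\rho)$ is $0$ or $1$ according as $z(\rho)<r^*_0$ or $z(\rho)>r^*_0$; were $\mu(\rho)=\mu^*$ to hold only finitely often, $z(\rho)-r^*_0$ would be eventually single-signed, $N(\rho)$ eventually of fixed parity and hence eventually constant, contradicting $N(\rho)\to\8$. So $\mu(\rho_k)=\mu^*$ for a sequence $\rho_k\to\8$, and the $U(\,\cdot\,,\rho_k)$ --- with pairwise distinct norms $\rho_k$ --- are distinct classical solutions of \eref{BP} at $\mu=\mu^*$.

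\textbf{Part (ii).} Here I would show $\mu$ is strictly increasing. With $\psi:=\partial_\rho u$ solving $\psi''+\tfrac{N-1}{r}\psi'+f'(u(r,\rho))\psi=0$, $\psi(0)=1$, $\psi'(0)=0$, differentiating $u(z(\rho),\rho)=0$ gives $z'(\rho)=-\psi(z(\rho),\rho)/u'(z(\rho),\rho)$ with $u'(z(\rho),\rho)<0$, so it suffices to prove $\psi(r,\rho)>0$ on $(0,z(\rho)]$ for all $\rho$, equivalently that $\phi\mapsto\int_{B(z(\rho))}\bigl(|\nabla\phi|^2-f'(u)\phi^2\bigr)$ is positive definite on $H^1_{0,rad}(B(z(\rho)))$. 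To get this I would: (a) invoke the transferred Theorem~\ref{THB}(iii) --- regular solutions of \eref{LE} never meet $v^*$, hence, through the approximation of large solutions, $\calZ_{(0,r^*_0)}[u(\,\cdot\,,\rho)-u^*]=0$; with $u(\,\cdot\,,\rho)<u^*$ near $r=0$ this yields $u(\,\cdot\,,\rho)<u^*$ on the whole common interval and in particular $z(\rho)<r^*_0$; (b) use $f''>0$ (so $f'$ increasing) to get $f'(u(r,\rho))<f'(u^*(r))$, and combine $u^*=F^{-1}[k^{-1}r^2(1+\theta(r))]$, $k=2(N-2q)$, with the hypothesis $F(u)f'(u)\le(N-2)^2/(8(N-2q))$ to obtain
\[
f'(u^*(r))\ \le\ \frac{(N-2)^2}{8(N-2q)\,F(u^*(r))}\ =\ \frac{(N-2)^2}{4\,r^2\,(1+\theta(r))};
\]
(c) feed this into the (improved) Hardy inequality on $B(z(\rho))$. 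The delicate point is (c): absorbing the factor $1+\theta(r)$ --- known only to tend to $1$ as $r\downarrow0$ --- against the surplus in the Hardy inequality (equivalently, bounding $u(\,\cdot\,,\rho)$ above by the clean profile $F^{-1}[k^{-1}r^2]$), for which the hypothesis $q\le q_{JL}$, i.e.\ $8q(N-2q)\le(N-2)^2$, is exactly the required slack. Once $\psi>0$ up to $z(\rho)$, $\mu(\rho)=z(\rho)^2$ is strictly increasing; since also $z(\rho)<r^*_0$, it is bounded, so $\mu(\rho)\uparrow\bar\mu\le\mu^*$ for some $\bar\mu>0$ and the branch blows up in $L^\infty$ at $\bar\mu$; with $\mu(0^+)=0$ the map $\rho\mapsto\mu(\rho)$ is an increasing homeomorphism of $(0,\8)$ onto $(0,\bar\mu)$, so for each $\mu\in(0,\bar\mu)$ there is a unique radial --- hence, by Gidas--Ni--Nirenberg, a unique classical --- solution of \eref{BP}.
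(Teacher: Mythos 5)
Your treatment of part~(i) follows the paper's strategy in substance: track the intersection number of $u(\cdot,\rho)$ with $u^*$ on the shrinking interval $(0,\min(z(\rho),r^*_0))$, note that simple zeros can only be created or destroyed through the right endpoint, observe that this forces $z(\rho)=r^*_0$, and pit that against Theorem~\ref{THA}(ii). One small remark: you flag ``$z(\rho)\to r^*_0$'' as the main technical point, but it is not needed and the paper does not prove it; a uniform lower bound $z(\rho)\ge R_0>0$ for large $\rho$ (already available from Proposition~\ref{S4P1}, used in Lemma~\ref{S4L1}) is enough to guarantee $r_1<\min(z(\rho),r^*_0)$ and hence $N(\rho)\to\8$.

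Part~(ii) is where you diverge from the paper, and there is a genuine gap. Your step~(a) claims that Theorem~\ref{THB}(iii), which concerns the \emph{limit} equation \eref{LE}, ``transfers'' to $\calZ_{(0,r^*_0)}[u(\cdot,\rho)-u^*]=0$ for the \emph{original} equation. That implication is not available: the scaling limit of Section~4 gives local uniform convergence $\tu\to v$, $\tu^*\to v^*$, which lets you push $\calZ=\8$ from the limit back to the original equation (lower semicontinuity of the zero count), but a zero count of $0$ for the limit does not preclude intersections of $u$ and $u^*$ at finite scales where the approximation has no force. Even granting $u<u^*$, your step~(c) cannot close: with $u^*=F^{-1}[k^{-1}r^2(1+\theta(r))]$ the hypothesis only yields
\[
f'(u^*(r))\le\frac{(N-2)^2}{8(N-2q)F(u^*(r))}=\frac{(N-2)^2}{4r^2(1+\theta(r))},
\]
and when $\theta(r)<0$ this exceeds the Hardy constant $(N-2)^2/(4r^2)$. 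You invoke ``slack from $q\le q_{JL}$,'' but that slack is already exhausted by the hypothesis $F(u)f'(u)\le(N-2)^2/(8(N-2q))$; nothing is left over to absorb $1+\theta(r)$. The paper sidesteps both problems by \emph{not} comparing $u$ with $u^*$: instead, Lemma~\ref{S8L2} applies the Cole--Hopf transformation $\tilde{u}:=F_q^{-1}[F[u]]$, uses the sign hypothesis $F(u)f'(u)\ge q$ to make the transformed equation a supersolution-type inequality for $\Delta\tilde u+f_q(\tilde u)$, and then compares $\tilde u$ directly (via Propositions~\ref{S5P1}(iii), \ref{S5P2}(ii) and a boundary-integral sliding argument) with the \emph{exact} profile $F_q^{-1}[k^{-1}r^2]$, i.e.\ with no $\theta$ factor. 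This gives the clean pointwise bound $u\le F^{-1}[k^{-1}r^2]$, from which $f'(u)\le(N-2)^2/(4r^2)$ follows exactly and Hardy closes the argument. That comparison lemma, not a transfer of Theorem~\ref{THB}(iii), is the missing ingredient in your proposal.
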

The intersection number is used in the proof of Corollary~\ref{S1C1}~(i) and (ii).
If $3\le N\le 9$, (\ref{f1'}) and (\ref{f2}) hold, and $q=1$, then $q_{JL}<q<q_S$, and hence the conclusions of Corollary~\ref{S1C1}~(i) hold.
When $f(u)=(u+1)^p$ ($p_S<p<p_{JL}$) and $f(u)=e^u$ ($3\le N\le 9$), Joseph-Lundgren~\cite{JL73} proved Corollary~\ref{S1C1} by phase plane analysis.
See Jacobsen-Schmitt~\cite{JS02} for quasilinear equations with $f(u)=e^u$.
When $f(u)=u^p+c_0u$ ($p_S<p<p_{JL}$), the existence of infinitely many turning points was numerically shown by Budd-Norbury~\cite{BN87}, and later proved by Dolbeault-Flores~\cite{DF07} and Guo-Wei~\cite{GW11}.
In \cite{Mi14,Mi15} Corollary~\ref{S1C1}~(i) was proved in the case $f(u)=u^p+g_0(u)$ ($p_S<p<p_{JL}$) and $f(u)=e^u+g_1(u)$ ($3\le N\le 9$), where $g_0$ and $g_1$ are lower order terms.
When $f(u)=\exp(u^p)$, $p>0$, $3\le N\le 9$, and the domain is not necessarily a ball, Dancer~\cite{D13} proved the existence of infinitely many turning points.
However, the locations of the turning points are not determined, and hence the existence of infinitely many positive solutions for some $\mu>0$ remains open.
In the ball case Kikuchi-Wei~\cite{KW16} proved Corollary~\ref{S1C1}~(i), using another similarity transformation introduced by \cite{D13}.
On the other hand, a non-existence of a turning point was studied by Brezis-V\'azquez~\cite{BV97} in a general domain.
They gave a necessary and sufficient condition, using a singular solution.
See \cite{Mi14,Mi15,MN16} for the cases $f(u)=u^p+g_0(u)$ and $f(u)=e^u+g_1(u)$ in the ball.\\

Let us explain technical details.
The exponent $q$, which is the H\"older conjugate of the growth rate of $f$, was introduced in Dupaigne-Farina~\cite{DF10}.
They gave sufficient conditions for (\ref{S1E1}) to have a bounded stable nonnegative solution in $\R^N$, using $q$.
Fujishima-Ioku~\cite{FI16} used the exponent $A:=\lim_{u\to\8}F(u)f'(u)$ to study the solvablity of semilinear parabolic equations.
When $f$ is a $C^2$-function, $A$ is equal to $q$.
Indeed, by L'Hospital's rule we have
\[
A=\lim_{u\to\8}F(u)f'(u)=\lim_{u\to\8}\frac{F(u)}{1/f'(u)}=\lim_{u\to\8}\frac{f'(u)^2}{f(u)f''(u)}=q.
\]
One of the advantages of using $q$ is that $q$ can deal with the exponential and super-exponential growth ($q=1$), while $p=q/(q-1)$ cannot.
A singular solution of Theorem~\ref{THA}~(i) is constructed by a standard method with the contraction mapping theorem.
However, our method can be applied to rather general nonlinearities owing to the expression of the singular solution (\ref{THAE1}).
In the proof of Theorem~\ref{THA}~(ii) we rescale a regular and singular solutions of (\ref{S1EQ}), using the generalized similarity transformation (\ref{S4E1}).
Then these two functions locally uniformly converge to a regular and singular solutions of (\ref{LE}), respectively.
Hence, (\ref{LE}) can be considered as a limit equation of (\ref{S1EQ}).
By $f_q(u)$ we define
\[
f_q(u):=
\begin{cases}
u^p & \textrm{if}\ q>1,\\
e^u & \textrm{if}\ q=1,
\end{cases}
\]
where $p:=q/(q-1)$ provided that $q>1$.
Let $v(s,\sigma)$ be the solution of (\ref{LP}).
We use a generalized Cole-Hopf transformation:
\begin{equation}\label{S1E5}
w(s,\tau):=F_q^{-1}[F[v(s,\sigma)]]\ \ \textrm{and}\ \ \tau:=F_q^{-1}[F[\sigma]],
\end{equation}
where
\[
F_q[v]:=\int_v^{\infty}\frac{dt}{f_q(t)}=
\begin{cases}
\frac{1}{p-1}v^{-p+1} & \textrm{if}\ q>1,\\
e^{-v} & \textrm{if}\ q=1
\end{cases}
\]
and $F_q^{-1}$ is the inverse function of $F_q$.
Specifically,
\[
w(s,\tau):=
\begin{cases}
(p-1)^{\frac{-1}{p-1}}\left(F[v(s,\sigma)]\right)^{\frac{-1}{p-1}} & \textrm{if}\ q>1,\\
-\log F[v(s,\sigma)] & \textrm{if}\ q=1,
\end{cases}
\]
where $p:=q/(q-1)$ if $q>1$.
This transformation was introduced by Fujishima-Ioku~\cite{FI16} and used in the study of the existence of a solution for semilinear parabolic equations.
By Lemma~\ref{S5L0} we see that $w$ satisfies $\Delta w+f_q(w)=0$, i.e.,
\begin{equation}\label{OPF}
\begin{cases}
\Delta w+w^p=0 & \textrm{if}\ q>1,\\
\Delta w+e^w=0 & \textrm{if}\ q=1.
\end{cases}
\end{equation}
It is interesting that all the limit equations (\ref{LE}) can be classified into two typical cases (\ref{OPF}) and that all the limit equations become a one-parameter family of equations $\{\Delta w+f_q(w)=0;\ q\ge 1\}$ in spite of arbitrariness of $f$.
See Figure~\ref{fig1} which shows a fundamental strategy in the paper.
Intersection properties of the two typical equations are well known.
Theorem~\ref{THB} follows from them.
\begin{figure}\label{fig1}
\begin{tabular}{ccl}
$\Delta v+f(v)+\frac{q-F(v)f'(v)}{F(v)f(v)}|\nabla v|^2=0$ & 
$\xLongleftrightarrow[\textrm{equivalent}]{{F[v(s)]=F_q[w(s)]}}$ &
$\Delta w+f_q(w)=0$
\\
\mbox{$\quad\qquad$}\rotatebox[origin=c]{90}{$\longrightarrow$} $\textrm{scaling limit as $\lambda\downarrow 0$}\atop{(\lambda^2F[v(s)]=F[u(r)],\ \lambda s=r)}$
& & 
\mbox{$\quad\qquad$}\rotatebox[origin=c]{90}{$\longleftrightarrow$} $\textrm{scale invariant(self-similar)}\atop{(\lambda^2F_q[w(s)]=F_q[u(r)],\ \lambda s=r)}$
\\
$\Delta u+f(u)=0\ \ \textrm{with $1\le q<q_S$}$ & & 
$\Delta u+f_q(u)=0$
\end{tabular}\caption{Relation between the original equation and limit equation}
\end{figure}

Corollary~\ref{S1C0} is a simple application of Theorem~\ref{THA}.
In the proof of Corollary~\ref{S1C0} we use a convexity of $f$.
In the proof of Corollary~\ref{S1C1}~(i) we use Theorem~\ref{THA}~(ii).
Specifically, the existence of infinitely many turning points corresponds to $\calZ_{(0,r_1)}[u(\,\cdot\,,\rho)-u^*(\,\cdot\,)]\to\8$ as $\rho\to\8$.
In the proof of Corollary~\ref{S1C1}~(ii) we use a comparison theorem devised by Gui~\cite{G96,GNW92}.
We can compare radial solutions of (\ref{LE}) and $\Delta u+f_q(u)=0$.
Then we show that the first (and hence every) eigenvalue is strictly positive, using an argument similar to the one used in Brezis-V\'azquez~\cite{BV97}.
Therefore, the curve has no turning point.
These methods were used in \cite{KW16,Mi14,Mi15,MN16} although nonlinearities are restricted.

This paper consists of eight sections.
In Section~2 we show that $q\ge 1$.
Five examples of nonlinearities are given.
In Section~3 we construct a singular solution.
In Section~4 we prove the convergence to solutions of the limit equation of (\ref{S1EQ}).
In Section~5 and 6 we prove Theorem~\ref{THB} and \ref{THA}, respectively.
In Sections~7 and 8 we prove Corollaries~\ref{S1C0} and \ref{S1C1}, respectively.

\section{Preliminaries and five nonlinearities}
The following lemma is a fundamental property of the exponent $q$, which was found by \cite{FI16}.
See \cite[Remark 1.1]{FI16}.
\begin{lemma}\label{S2L1}
Suppose that (\ref{f1}) and (\ref{f2}) hold.
Then $q\ge 1$.
\end{lemma}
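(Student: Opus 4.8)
The plan is to study the single auxiliary function $u\mapsto f(u)F(u)$, whose derivative turns out to be remarkably simple, and to combine this with the sign constraint $f(u)F(u)>0$. First recall that, since $F(u)=\int_u^{\infty}dt/f(t)$ and $f>0$, we have $F'(u)=-1/f(u)$; moreover $F(u)$ is a convergent integral (by (\ref{f1})) of a strictly positive function, so $F(u)>0$, and hence $f(u)F(u)>0$, for all sufficiently large $u$. I will also use the identity
\[
A:=\lim_{u\to\infty}F(u)f'(u)=q,
\]
which was already established in the introduction as a consequence of (\ref{f2}) via L'Hospital's rule.

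The key computation is
\[
\frac{d}{du}\bigl(f(u)F(u)\bigr)=f'(u)F(u)+f(u)F'(u)=f'(u)F(u)-1,\qquad u>u_0.
\]
I would then argue by contradiction. Suppose $q<1$. Since $f'(u)F(u)\to q$ as $u\to\infty$, there exist $\varepsilon>0$ and $u_1>u_0$ such that $f'(u)F(u)\le 1-\varepsilon$ for all $u\ge u_1$, and therefore $\frac{d}{du}\bigl(f(u)F(u)\bigr)\le-\varepsilon$ on $[u_1,\infty)$. Integrating from $u_1$ to $u$ gives $f(u)F(u)\le f(u_1)F(u_1)-\varepsilon(u-u_1)\to-\infty$ as $u\to\infty$, contradicting $f(u)F(u)>0$. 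Hence $q\ge 1$, as claimed.

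There is no serious obstacle in this argument; its whole content is the observation that $fF$ has the clean derivative $f'F-1$ and is constrained to be positive, so its limiting slope $q-1$ cannot be negative. The only point requiring a little attention is the step $A=q$ — that is, that $\lim_{u\to\infty}F(u)f'(u)$ exists and equals $q$ — which ultimately rests on $f'(u)\to\infty$; this follows from (\ref{f1}) and (\ref{f2}) (since $q\in(0,\infty)$ forces $f''>0$ near infinity, hence $f'$ eventually monotone, while (\ref{f1}) precludes a finite limit for $f'$), but as the identity is already recorded in the introduction I will simply invoke it.
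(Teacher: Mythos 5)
Your proof is correct, and it reaches the contradiction by a cleaner route than the paper's. Both arguments invoke the L'Hospital identity $\lim_{u\to\infty}F(u)f'(u)=q$ and then suppose $q<1$. The paper translates $F(u)f'(u)<\kappa<1$ for large $u$ into the second-order inequality $FF''<\kappa(F')^2$ (using $f'=F''/(F')^2$), observes that $F^{1-\kappa}$ is consequently concave with strictly negative slope at some $u_0$, and concludes that $F^{1-\kappa}$ would eventually turn negative, contradicting $F>0$. You instead track the first-order quantity $h:=fF$ directly: since $F'=-1/f$, the product rule gives $h'=f'F-1\to q-1<0$, so $h$ decays at least linearly to $-\infty$, contradicting $h>0$. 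This buys a one-line computation in place of the auxiliary exponent $\kappa$, the rewriting entirely in terms of $F$, and the concavity argument, and it exposes the mechanism cleanly: $q-1$ is the asymptotic slope of the positive function $fF$, so it cannot be negative. Your parenthetical justification that $f'(u)\to\infty$ (needed for the L'Hospital step, and also implicitly used in the paper) is sound as well: (\ref{f2}) with $q\in(0,\infty)$ forces $f''>0$ and $f'>0$ for large $u$, so $f'$ is eventually increasing, and a finite limit would make $f$ grow at most linearly, contradicting the finiteness of $\int^\infty dt/f(t)$ required by (\ref{f1}).
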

We show the proof for readers' convenience.
\begin{proof}
By L'Hospital's rule we have
\[
\lim_{u\to\8}F(u)f'(u)=\lim_{u\to\8}\frac{F(u)}{{1}/{f'(u)}}=\lim_{u\to\8}\frac{f'(u)^2}{f(u)f''(u)}=q.
\]
We show by contradiction that $q\ge 1$.
Suppose that $0\le q<1$.
There exists $\kappa\in (0,1)$ such that $F(u)f'(u)<\kappa$ for large $u>0$.
Since $f'(u)=F''(u)F(u)^{-2}$, there is $u_0\in\R$ such that $F(u)F''(u)F'(u)^{-2}<\kappa$ for $u>u_0$.
Note that $F'(u)<0$.
Solving the differential inequality, we have
\[
F(u)^{1-\kappa}<F(u_0)^{1-\kappa}-(1-\kappa)\frac{-F'(u_0)}{F(u_0)^{\kappa}}(u-u_0)\ \ \textrm{for}\ \ u>u_0.
\]
We see that $F(u)<0$ for large $u>0$.
We obtain a contradiction, because $F(u)>0$.
\end{proof}

\subsection{Power nonlinearity, {\boldmath $q>1$}}
Let $f_a(u):=(u+a)^p$, $p>1$.
Then, (\ref{f1'}) (resp. (\ref{f11})) holds if $a>0$ (resp. $a=0$).
Since $F^{-1}[\lambda^{-2}F(u(\lambda s))]=\lambda^{2/(p-1)}(u(\lambda s)+a)-a$, the transformation $u(s)\mapsto F^{-1}[\lambda^{-2}F(u(\lambda s))]$ is the usual similarity transformation in the power case when $a=0$.
Since $f_a'(u)^2/(f_a(u)f''_a(u))\equiv p/(p-1)$ for $u\ge 0$, we see that $q=p/(p-1)$, and hence (\ref{f2}) holds.
When $a=0$, Theorem~\ref{THB} recovers the well-known intersection property studied by \cite{JL73,W93}.
Since $F(u)f'_a(u)\equiv q$ for $u\ge 0$, the limit equation (\ref{LE}) is the original equation (\ref{S1EQ}).
Therefore, if $u(r)$ is a solution of (\ref{S1EQ}), then $F^{-1}[\lambda^{-2}F(u(\lambda r))]$ is also a solution of (\ref{S1EQ}).
When $a>0$, Corollary~\ref{S1C0} is applicable.
If $p_S<p<p_{JL}$, then $q_{JL}<q<q_S$ and Corollary~\ref{S1C1}~(i) is applicable.
If $p\ge p_{JL}$, then $q\le q_{JL}$ and Corollary~\ref{S1C1}~(ii) is applicable.

\subsection{Exponential nonlinearity, {\boldmath $q=1$}}
Let $f(u):=e^u$.
Then (\ref{f12}) holds.
Since $F^{-1}[\lambda^{-2}F(u(\lambda s))]=u(\lambda s)+2\log\lambda$, the transformation $u(s)\mapsto F^{-1}[\lambda^{-2}F(u(\lambda s))]$ is the usual transformation in the exponential case.
Since $f'(u)^2/(f(u)f''(u))\equiv 1$ for $u\in\R$, we see that $q=1$, and hence (\ref{f2}) holds.
Theorem~\ref{THB} recovers the intersection property for the case $f(u)=e^u$, which was studied by \cite{JL73}.
Since $F(u)f'(u)\equiv 1$ for $u\ge 0$, the limit equation (\ref{LE}) is the original equation (\ref{S1EQ}).
Therefore, if $u(r)$ is a solution of (\ref{S1EQ}), 
$F^{-1}[\lambda^{-2}F(u(\lambda r))]$ is also a solution of (\ref{S1EQ}).
Corollary~\ref{S1C0} is applicable.
If $3\le N\le 9$, then $q_{JL}<q<q_S$ and Corollary~\ref{S1C1}~(i) is applicable.
If $N\ge 10$, then $q\le q_{JL}$ and Corollary~\ref{S1C1}~(ii) is applicable.

\subsection{Log-convex or log-concave function, {\boldmath $q=1$}}
We consider the case $f(u):=\exp(g(u))$ and $g'(u)>0$ $(u\ge 0)$.
Since
\[
F(u)f'(u)=1-g'(u)e^{g(u)}\int_u^{\infty}\frac{g''(t)}{g'(t)^2}e^{-g(t)}dt
\]
and
\begin{equation}\label{S23E1}
\frac{f'(u)^2}{f(u)f''(u)}=\frac{1}{1+\frac{g''(u)}{g'(u)^2}}\,
\end{equation}
we can check that if $g''(u)<0$, $g'(u)^2+g''(u)>0$, $\lim_{u\to\infty}g''(u)/g'(u)^2=0$, and $N>0$ is large, then Corollary~\ref{S1C1}~(ii) is applicable.

Next we consider the case $f(u):=\exp(g(u))$ and $g'(u)>0$ $(u\ge 0)$ and $g''(u)>0$ $(u\ge 0)$.
The following lemma holds:
\begin{lemma}\label{S2L2}
If $f$ satisfies (\ref{f2}), then $q=1$.
\end{lemma}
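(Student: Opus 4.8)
The plan is to read off the sign of the correction term in the identity \eref{S23E1} and combine it with the lower bound $q\ge 1$ supplied by Lemma~\ref{S2L1}. Writing $f(u)=\exp(g(u))$ we have $f'=g'e^{g}$ and $f''=(g''+(g')^2)e^{g}$, whence
\[
\frac{f'(u)^2}{f(u)f''(u)}=\frac{(g'(u))^2}{g''(u)+(g'(u))^2}=\frac{1}{1+\dfrac{g''(u)}{g'(u)^2}},
\]
which is exactly \eref{S23E1}. In the situation of this subsection $g'(u)>0$ and $g''(u)>0$ for $u\ge 0$, so $g''(u)/g'(u)^2>0$ and therefore $f'(u)^2/(f(u)f''(u))<1$ for all sufficiently large $u$.

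Now, under \eref{f2} the limit $q=\lim_{u\to\8}f'(u)^2/(f(u)f''(u))$ exists in $(0,\8)$, so letting $u\to\8$ in the strict inequality above yields $q\le 1$. On the other hand, Lemma~\ref{S2L1} gives $q\ge 1$. Combining the two bounds forces $q=1$, as claimed.

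I do not expect a genuine obstacle here; the only point requiring a little care is that the pointwise inequality $f'(u)^2/(f(u)f''(u))<1$ is strict while passing to the limit only yields $q\le 1$, so the equality $q=1$ genuinely relies on the independent lower bound $q\ge 1$ from Lemma~\ref{S2L1} rather than following from \eref{S23E1} alone.
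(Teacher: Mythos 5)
Your argument is correct and matches the paper's proof exactly: both read off $q\le 1$ from the sign of $g''/g'^2$ in \eref{S23E1} and combine it with $q\ge 1$ from Lemma~\ref{S2L1}. The only difference is that you spell out the derivation of \eref{S23E1}, which the paper takes as already established.
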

\begin{proof}
We see by (\ref{S23E1}) that $\lim_{u\to\8}{f'(u)^2}/{f(u)f''(u)}\le 1$, because the limit exists.
By Lemma~\ref{S2L1} we see that $q=1$.
\end{proof}
We can easily construct an example of $g(u)$ such that $g'(u)>0$, $g''(u)>0$, and $g''(u)/g'(u)^2$ oscillates as $u\to\8$.
Therefore, the limit $\lim_{u\to\8}g''(u)/g'(u)^2$ may not exist even if $g'(u)>0$ and $g''(u)>0$.
The assumption (\ref{f2}) in Lemma~\ref{S2L2} cannot be removed.

We give some examples such that the limit exists and Corollaries~\ref{S1C0} and \ref{S1C1}~(i) are applicable to $f(u)=\exp(g(u))$.
\begin{enumerate}
\item $f(u)=\exp(u^p)$ ($p>1$),
\item $f(u)=\exp(e^u)$,
\item If $g_n(u)$ satisfies the following:
\begin{equation}\label{S2S3E0}
\textrm{For $u\ge 0$, $g_n'(u)>0$ and $g_n''(u)>0$, and $\lim_{u\to\8}\frac{g_n''(u)}{g_n'(u)^2}=0$},
\end{equation}
then $g_{n+1}(u):=\exp(g_n(u))$ also satisfies (\ref{S2S3E0}).
Thus, $f_n(u):=\exp(g_{n}(u))$, $n\ge 1$, satisfies (\ref{f1'}) and (\ref{f2}) if (\ref{S2S3E0}) holds for $n=1$.
A typical example is the case $g_2(u):=e^u$.
Then $f_n$ is the iterated exponential function
\[
\textrm{$f_n(u):=\underbrace{\exp(\exp(\cdots\exp(u)\cdots))}_{n}$ $(n\ge 2)$}.
\]
\end{enumerate}

\subsection{Product of power and log, {\boldmath $q>1$}}
Let $f(u):=(u+a)^p(\log (u+a))^{\gamma}$, $p>1$, $\gamma\ge 0$, and $a>1$.
Then (\ref{f1'}) holds.
By direct calculation we see that $q=\lim_{u\to\8}f'(u)^2/(f(u)f''(u))=p/(p-1)$.
Corollaries~\ref{S1C0} and \ref{S1C1}~(i) are applicable.

\subsection{Tetration, {\boldmath $q=1$}}
Let $f_{n+1}(u):=(u+a)^{f_n(u)}$ $(a>1)$.
Suppose the following:
\begin{equation}\label{S25E1}
\textrm{For $u\ge 0$, $f_n(u)>1$, $f'_n(u)>0$, and $f''_n(u)>0$ and}\ \lim_{u\to\8}\frac{f'_n(u)^2}{f_n(u)f''_n(u)}=1.
\end{equation}
Then we easily see that $\lim_{u\to\8}f_n(u)=\8$.
Using this limit, we can show that $f_{n+1}(u)$ also satisfies (\ref{S25E1}).
By (\ref{S25E1}) and the definition of $f_{n+1}(u)$ we have that
\begin{equation}\label{S25E2}
\frac{f'_{n+2}(u)^2}{f_{n+2}(u)f''_{n+2}(u)}=
\left[ 1+\frac{f''_{n+1}(u)\log (u+a)+{2f'_{n+1}(u)}{(u+a)^{-1}}
-{f_{n+1}(u)}{(u+a)^{-2}}}{
 \left\{f'_{n+1}(u)\log(u+a)+{f_{n+1}(u)}{(u+a)^{-1}}
 \right\}^2}
\right]^{-1}\le 1,
\end{equation}
because
\[
f''_{n+1}(u)\log (u+a)+\frac{2f'_{n+1}(u)}{u+a}-\frac{f_{n+1}(u)}{(u+a)^2}\ge 0\ \ \textrm{for}\ \ u\ge 0.
\]
By (\ref{S25E1}) and (\ref{S25E2}) we see that if $f_1(u)$ satisfies (\ref{S25E1}), then $f_n(u)$ satisfies (\ref{f1'}) and (\ref{f2}) for $n\ge 1$.

Let $f_2(u):=(u+a)^{u+a}$ $(a>1)$ and $f_{n+1}:=(u+a)^{f_n(u)}$ $(n\ge 2)$.
Then,
\[
\textrm{$f_n(u)=\underbrace{(u+a)^{\left[ (u+a)^{\left[\rddots {(u+a)}\right]}\right]}}_{n}$}
\]
which is called the $n$-th tetration of $(u+a)$.
Corollaries~\ref{S1C0} and \ref{S1C1}~(i) are applicable to $f_n(u)$, $n\ge 2$.

\section{Singular solution}
The goal of this section is to prove the following:
\begin{lemma}\label{S3L1}
Suppose that $N\ge 3$ and (\ref{f1}) and (\ref{f2}) hold.
Let $k$ be defined by (\ref{THAE1+}).
If $q<q_S$, then there are a small $r_0>0$ and $\theta(r)\in C^2(0,r_0)$ such that $u^*(r):=F^{-1}[k^{-1}r^2(1+\theta(r))]$ satisfies (\ref{S1EQ}) on $(0,r_0)$ and $\lim_{r\downarrow 0}\theta(r)=0$.
As a consequence, $\lim_{r\downarrow 0}u^*(r)=\8$, and hence $u^*(r)$ is a singular solution of (\ref{S1EQ}) near $r=0$.
\end{lemma}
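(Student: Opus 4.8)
The plan is to pass to the substituted unknown $w:=F(u^*)$, peel off an Euler (equidimensional) linear part whose indicial exponents encode $q$, and solve for the correction by the contraction mapping theorem. Since $F'=-1/f$, a $C^2$ function $u^*$ solves \eref{S1EQ} on an interval if and only if $w:=F(u^*)$ solves $w''+\frac{N-1}{r}w'-f'(F^{-1}(w))(w')^2=1$. If $f$ were exactly critical in the sense $F(u)f'(u)\equiv q$, i.e. $f'(F^{-1}(w))=q/w$, then $w=k^{-1}r^2$ would be an exact solution precisely when $(2+2(N-1)-4q)k^{-1}=1$, that is $k=2N-4q$; this is positive because $q<q_S=(N+2)/4<N/2$ for $N\ge3$. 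This dictates the ansatz \eref{THAE1}, $u^*(r)=F^{-1}[k^{-1}r^2(1+\theta(r))]$ with $k:=2N-4q$, and reduces the lemma to producing $\theta\in C^2(0,r_0)$ with $\theta(r)\to0$ as $r\downarrow0$.

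Write $f'(F^{-1}(w))=(q+\e(w))/w$ with $\e(w):=wf'(F^{-1}(w))-q$. Since $wf'(F^{-1}(w))=F(u)f'(u)$ for $u=F^{-1}(w)$, the computation in the proof of Lemma~\ref{S2L1} gives $\e(w)\to0$ as $w\downarrow0$; moreover, differentiating and abbreviating $a:=Ff'$, $b:=ff''/(f')^2$, one finds $w\e'(w)=a(u)(1-a(u)b(u))$, so \eref{f2} (together with $a(u)\to q$) yields the sharper fact $w\e'(w)\to0$ as $w\downarrow0$. Substituting $w=k^{-1}r^2(1+\theta)$ into the $w$-equation and rearranging turns it into
\[
r^2\theta''+(N+3-4q)\,r\theta'+(2N-4q)\,\theta=\calN[\theta],
\]
whose left side is an Euler operator $L_0$ and whose right side collects the terms carrying a factor $\e(w_\theta)$, $w_\theta:=k^{-1}r^2(1+\theta)$, together with a term proportional to $r^2(\theta')^2$; thus $\|\calN[\theta]\|_\infty\le C(\|\theta\|^2+\sup_{0<r\le r_0}|\e(w_\theta(r))|)$ with $\|\theta\|:=\sup_{0<r\le r_0}(|\theta(r)|+r|\theta'(r)|)$. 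The indicial equation of $L_0$ is $\alpha^2+(N+2-4q)\alpha+(2N-4q)=0$, with discriminant $(N-4q)^2-4(N-1)$, which is positive, zero, or negative according as $q<q_{JL}$, $q=q_{JL}$, or $q>q_{JL}$; the exponents are then real and distinct, a double root, or a conjugate pair, but in every case $\cRe\alpha_\pm<0$ because $q<q_S$. Hence both homogeneous solutions of $L_0$ blow up as $r\downarrow0$, so the variation-of-parameters operator $\mathcal{T}_0$ solving $L_0\zeta=h$ by integration from $r=0$ (in the distinct-root case $\mathcal{T}_0 h(r)=\frac{1}{\alpha_+-\alpha_-}[r^{\alpha_+}\int_0^r s^{-\alpha_+-1}h\,ds-r^{\alpha_-}\int_0^r s^{-\alpha_--1}h\,ds]$, with the evident modification in the other two cases) is well defined: the integrals converge near $0$ since $\cRe\alpha_\pm<0$, giving $\|\mathcal{T}_0 h\|\le C_{N,q}\|h\|_\infty$ and $\mathcal{T}_0 h(r)\to0$ whenever $h(r)\to0$.

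Next I would put $\mathcal{T}\theta:=\mathcal{T}_0(\calN[\theta])$ on the complete metric space $X_{r_0,\delta}:=\{\theta\in C^1((0,r_0])\colon \theta(r)\to0,\ \|\theta\|\le\delta\}$. For $\theta\in X_{r_0,\delta}$ one has $0<w_\theta(r)\le 2k^{-1}r_0^2$, so $\sup_{0<r\le r_0}|\e(w_\theta(r))|\le\omega(r_0):=\sup_{0<t\le 2k^{-1}r_0^2}|\e(t)|\to0$ as $r_0\downarrow0$; combined with the bounds above this gives $\|\mathcal{T}\theta\|\le C_{N,q}C(\delta^2+\omega(r_0))\le\delta$ once $\delta$ and then $r_0$ are small, so $\mathcal{T}$ maps $X_{r_0,\delta}$ into itself. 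For the contraction the only delicate contribution is the difference of the $\e$-factors: by the mean value theorem and $w_{\theta_1}-w_{\theta_2}=k^{-1}r^2(\theta_1-\theta_2)$,
\[
|\e(w_{\theta_1}(r))-\e(w_{\theta_2}(r))|\le\frac{1}{1-\delta}\Big(\sup_{0<t\le 2k^{-1}r_0^2}|t\,\e'(t)|\Big)\|\theta_1-\theta_2\|,
\]
the weight $k^{-1}r^2$ absorbing the $1/w\sim k/r^2$ in $\e'(w)$ and leaving exactly the small quantity of the previous paragraph, while the remaining pieces of $\calN[\theta_1]-\calN[\theta_2]$ carry a factor $\e$ or $\|\theta\|\le\delta$. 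Hence $\|\mathcal{T}\theta_1-\mathcal{T}\theta_2\|\le\frac12\|\theta_1-\theta_2\|$ for $\delta,r_0$ small, and Banach's theorem furnishes $\theta\in X_{r_0,\delta}$ with $L_0\theta=\calN[\theta]$ and $\theta(r)\to0$. Since $\theta\in C^1$ and $f\in C^2(u_0,\8)$ force $\calN[\theta]\in C^0(0,r_0)$, bootstrapping the integral equation gives $\theta\in C^2(0,r_0)$; then $u^*:=F^{-1}[k^{-1}r^2(1+\theta)]$ solves \eref{S1EQ} on $(0,r_0)$, and $\theta(r)\to0$ with $\lim_{u\to\8}F(u)=0$ (so $F^{-1}(s)\to\8$ as $s\downarrow0$) gives $u^*(r)\to\8$.

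The step I expect to be the main obstacle is the contraction estimate for the $\e$-factor in $\calN$. In general $\e$ is only $C^1$ with $\e'(w)$ blowing up like $1/w$ at $0$ (for $f(u)=u^p\log u$ one has $\e(w)\sim-1/\log w$, so $\e'(w)\sim 1/(w(\log w)^2)$), so continuity of $\e$ alone is insufficient; what rescues the argument is precisely the refined bound $w\e'(w)\to0$, which uses the \emph{existence} of the limit in \eref{f2}, not merely $\e(w)\to0$, and which makes $\theta\mapsto\e(w_\theta)$ Lipschitz with small constant once the $r^2$ weight in $w_\theta=k^{-1}r^2(1+\theta)$ is accounted for. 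The other decisive structural point is the sign $\cRe\alpha_\pm<0$: it holds exactly because $q<q_S$ (the exponents degenerate to a purely imaginary pair at $q=q_S$), and it is what makes integrating $L_0$ from $r=0$ select a decaying solution rather than one of the two growing homogeneous solutions.
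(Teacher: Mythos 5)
Your proof is essentially the paper's proof in a different but equivalent guise. The paper substitutes $t=\log r$, $x(t)=\theta(r)$ and runs a contraction argument on the first-order system $\frac{d}{dt}(x,y)=-A(x,y)+(0,f_0+f_1)$ with $A$ a constant matrix, whereas you stay in the $r$ variable and invert the Euler operator $L_0$ by variation of parameters; the indicial roots $\alpha_\pm$ are just $-\lambda_\mp$ where $\lambda_\pm$ are the eigenvalues of $A$ in the paper, and the sign analysis under $q<q_S$ and the $q_{JL}$ threshold matches. Your key observation that the Lipschitz constant is controlled by $w\e'(w)=a(u)(1-a(u)b(u))\to 0$ is exactly the paper's use of the identity $\lim_{u\to\8}F(u)f'(u)\tfrac{f(u)f''(u)}{f'(u)^2}=1$ together with the mean value theorem (the displayed formula with $\bar u$, leading to (\ref{L1E5})). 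So the strategy, the decomposition, and the decisive estimate are the same; the integral-operator packaging differs.

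There is one genuine gap in the fixed-point step. You define $X_{r_0,\delta}=\{\theta\in C^1((0,r_0]):\theta(r)\to0,\ \|\theta\|\le\delta\}$ and you check $\|\mathcal{T}\theta\|\le\delta$, but you do not verify that $\mathcal{T}\theta(r)\to0$. This is not automatic: your own remark says $\mathcal{T}_0 h(r)\to0$ only \emph{whenever} $h(r)\to0$, and $\calN[\theta]$ contains the term $q\,r^2\theta'^2/(1+\theta)$, which need not vanish as $r\downarrow0$ under your hypotheses. Indeed $\theta(r)\to0$ together with $r|\theta'|\le\delta$ does not force $r\theta'(r)\to0$ (e.g.\ $\theta(r)=\sin\bigl((\log(1/r))^2\bigr)/\log(1/r)$ has $\theta\to0$ and $r\theta'$ bounded but oscillating), and if $h$ is merely bounded then $\mathcal{T}_0 h$ can converge to a nonzero constant ($h\equiv c$ gives $\mathcal{T}_0 h\equiv c/k$). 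Two standard repairs: (a) build $r\theta'(r)\to0$ into the definition of $X_{r_0,\delta}$, after which $\calN[\theta](r)\to0$ and one reads off from the integral formula that both $\mathcal{T}_0h$ and $r(\mathcal{T}_0h)'$ tend to $0$; or (b), as the paper does, drop the decay condition from the space, run the contraction on the plain closed ball, and recover $\theta(r)\to0$ a posteriori from the uniqueness of the fixed point in nested balls $B_{\delta'}$ on shrinking intervals.

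Two small accuracy notes. Your discriminant computation $(N+2-4q)^2-4(2N-4q)=(N-4q)^2-4(N-1)$ is correct and matches the paper's $D$, and $\cRe\alpha_\pm<0$ indeed follows from $q<q_S$ (sum $<0$, product $=k>0$). Also be explicit, as you did implicitly, that $k=2N-4q>0$ because $q<q_S<N/2$ for $N\ge3$, since positivity of $k$ is what makes $w=F(u^*)>0$ and guarantees $u^*=F^{-1}(w)$ is well defined and blows up as $r\downarrow0$.
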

\begin{proof}
We find a singular solution of the form (\ref{THAE1}).
Let $T<0$ be negatively large, and let $x(t)\in C^2(-\8,T+1)$.
We assume that $\left\|x(t)\right\|_{L^{\8}(-\8,T+1)}$ is small.
We set $x(t):=\theta(r)$ and $t:=\log r$.
Then
\begin{equation}\label{L1E0}
u^*(r)=F^{-1}[k^{-1}e^{2t}(1+x(t))].
\end{equation}
Substituting $u^*(r)$ into (\ref{S1EQ}), we have
\[
x''+(N+2)x'+2Nx+4q-q\frac{(x'+2x+2)^2}{x+1}-(F(u^*)f'(u^*)-q)\frac{(x'+2x+2)^2}{x+1}=0.
\]
The equation is equivalent to
\[
x''+(N+2-4q)x'+(2N-4q)x-\frac{qx'^2}{x+1}-(F(u^*)f'(u^*)-q)\frac{(x'+2x+2)^2}{x+1}=0.
\]
We construct a solution such that $x(t)\to 0$ as $t\to-\8$.
Let $y(t):=x'(t)$. Then
\[
\frac{d}{dt}\left(
\begin{array}{c}
x\\
y
\end{array}
\right)=-A\left(
\begin{array}{c}
x\\
y
\end{array}
\right)+\left(
\begin{array}{c}
0\\
f_0(x,y)+f_1(x,y,t)
\end{array}
\right),
\]
where
\begin{align*}
A:=&\left(
\begin{array}{cc}
0 & -1\\
2N-4q & N+2-4q
\end{array}
\right),\\
f_0(x,y)&:=\frac{qy^2}{x+1},\\
f_1(x,y,t)&:=(F(u^*)f'(u^*)-q)\frac{(y+2x+2)^2}{x+1}.
\end{align*}
We show that the Lipschitz constants of $f_0$ and $f_1$ are small.
Let $X:=C((-\8,T],\R^2)$, and let $\e>0$ be small.
Here, $T<0$ and $\e>0$ are chosen later.
We define $B_{\e}:=\{(x,y)\in X;\ \left\|(x,y)\right\|_X:=\left\|x\right\|_{L^{\8}(-\8,T]}+\left\|y\right\|_{L^{\8}(-\8,T]}<\e\}$.
Let $(x_1,y_1)$, $(x_2,y_2)\in B_{\e}$.
We have
\begin{align}
|f_0(x_2,y_2)-f_0(x_1,y_1)|&\le q\left|\frac{y_1+y_2}{x_2+1}\right|\left|y_2-y_1\right|+\frac{q|y_1|^2}{|(x_1+1)(x_2+1)|}\left|x_2-x_1\right|\nonumber\\
&\le C\e(|x_2-x_1|+|y_2-y_1|).\label{L1E1}
\end{align}
By $u_i(r)$, $i=1,2$, we define $u_i(r):=F^{-1}[k^{-1}e^{2t}(1+x_i(t))]$.
We have
\begin{align}
|f_1(x_2,y_2,t)-f_1(x_1,y_1,t)|&\le
\left|F(u_1)f'(u_1)-q\right|\left|\frac{(2x_2+y_2+2)^2}{x_2+1}-\frac{(2x_1+y_1+2)^2}{x_1+1}\right|\nonumber\\
&\quad+\left|F(u_2)f'(u_2)-F(u_1)f'(u_1)\right|\left|\frac{(2x_2+y_2+2)^2}{x_2+1}\right|\label{L1E1+}.
\end{align}
Since
\begin{align}
\left|\frac{(2x_2+y_2+2)^2}{x_2+1}-\frac{(2x_1+y_1+2)^2}{x_1+1}\right|
&\le\frac{|2x_2+y_2+2|^2}{|(x_1+1)(x_2+1)|}|x_2-x_1|\nonumber\\
&\quad+\frac{|2x_1+2x_2+y_1+y_2+4|}{|x_1+1|}|2(x_2-x_1)+(y_2-y_1)|\nonumber\\
&\le C(|x_2-x_1|+|y_2-y_1|)\ \ \textrm{and}\nonumber
\end{align}
$|F(u_1)f'(u_1)-q|<\e$ for $t<T$, we have
\begin{equation}\label{L1E2}
|F(u_1)f'(u_1)-q|\left|\frac{(2x_2+y_2+2)^2}{x_2+1}-\frac{(2x_1+y_1+2)^2}{x_1+1}\right|\le C\e(|x_2-x_1|+|y_2-y_1|)
\end{equation}
for $t<T$.
Let $w_i(t):=F(u_i(r))$, $i=1,2$.
Since
\begin{align*}
\frac{d}{dw}(wf'(F^{-1}(w))) &=\left(1-wf'(F^{-1}(w))\frac{f(F^{-1}(w))f''(F^{-1}(w))}{f'(F^{-1}(w))^2}\right)f'(F^{-1}(w)),
\end{align*}
it follows from the mean value theorem that there is $\bw$ such that
\begin{multline*}
w_2f'(F^{-1}(w_2))-w_1f'(F^{-1}(w_1))\\
=\left(1-\bw f'(F^{-1}(\bw))\frac{f(F^{-1}(\bw))f''(F^{-1}(\bw))}{f'(F^{-1}(\bw))^2}\right)f'(F^{-1}(\bw))(w_2-w_1),
\end{multline*}
and $\min\{w_1(t),w_2(t)\}\le\bw(t)\le\max\{w_1(t),w_2(t)\}$.
Let $\bu:=F^{-1}(\bw)$. Then
\begin{equation}\label{L1E3}
F(u_2)f'(u_2)-F(u_1)f'(u_1)=\left(1-F(\bu)f'(\bu)\frac{f(\bu)f''(\bu)}{f'(\bu)^2}\right)f'(\bu)(F(u_2)-F(u_1)),
\end{equation}
where $\bu(r)$ satisfies $\min\{u_1(r),u_2(r)\}\le\bu(r)\le\max\{u_1(r),u_2(r)\}$.
Since $\lim_{r\downarrow 0}\min\{u_1(r),u_2(r)\}=\8$, $\lim_{r\downarrow 0}\bu(r)=\8$.
By L'Hospital's rule we have
\begin{equation}\label{L1E3+}
\lim_{u\to\8}F(u)f'(u)=\lim_{u\to\8}\frac{\int_{u}^{\8}{ds}/{f(s)}}{{1}/{f'(u)}}=\lim_{u\to\8}\frac{f'(u)^2}{f(u)f''(u)}=q.
\end{equation}
Thus,
\[
\lim_{u\to\8}F(u)f'(u)\frac{f(u)f''(u)}{f'(u)^2}=1.
\]
This means that
\[
\left|1-F(\bu)f'(\bu)\frac{f(\bu)f''(\bu)}{f'(\bu)^2}\right|<\e\ \ \textrm{for small $r>0$.}
\]
Since $\min\{u_1(r),u_2(r)\}\le\bu(r)\le\max\{u_1(r),u_2(r)\}$, there is $\bx(t)$ such that $\min\{x_1(t),x_2(t)\}\le\bx(t)\le\max\{x_1(t),x_2(t)\}$ and $\bu(r)=F^{-1}(k^{-1}e^{2t}(1+\bx(t)))$.
Then $ke^{2t}(1+\bx)=F(\bu)$.
We have
\begin{align}
|f'(\bu)(F(u_2)-F(u_1))|&\le|f'(\bu)ke^{2t}(x_2-x_1)|\nonumber\\
&\le\left|f'(\bu)F(\bu)\frac{x_2-x_1}{\bx+1}\right|\nonumber\\
&\le C|F(\bu)f'(\bu)||x_2-x_1|.\label{L1E4}
\end{align}
By (\ref{L1E3}) and (\ref{L1E4}) we see that
\begin{equation}\label{L1E5}
|F(u_2)f'(u_2)-F(u_1)f'(u_1)|\le C\e |x_2-x_1|\ \ \textrm{for $t<T$.}
\end{equation}
By (\ref{L1E1+}), (\ref{L1E2}), and (\ref{L1E5}) we have
\begin{equation}\label{L1E6}
|f_1(x_2,y_2,t)-f_1(x_1,y_1,t)|\le C\e (|x_2-x_1|+|y_2-y_1|)\ \ \textrm{for $t<T$.}
\end{equation}

By $\xi(t)$, $G(\xi(t),t)$, $\calF(\xi(t))$ we define
\begin{align*}
\xi(t)&:=\left(
\begin{array}{c}
x(t)\\
y(t)
\end{array}
\right),\\
G(\xi(t),t)&:=\left(
\begin{array}{c}
0\\
f_0(x,y)+f_1(x,y,t)
\end{array}
\right),\\
\calF(\xi(t))&:=\int_{-\8}^te^{-(t-\tau)A}G(\xi(\tau),\tau)d\tau,
\end{align*}
respectively.
We find a solution of the equation $\xi(t)=\calF(\xi(t))$ in $B_{\e}$ if $\e>0$ is small and $T<0$ is negatively large.
Then the solution $\xi(t)$ is corresponding to a solution of (\ref{S1EQ}) near $r=0$.
In fact, $u^*(r)=F^{-1}[k^{-1}r^2(1+x(\log r))]$ becomes a singular solution near $r=0$.
The eigenvalues of $A$ are
\[
\lambda_{\pm}:=\frac{(N+2-4q)\pm\sqrt{D}}{2},\ \ \textrm{where}\ \ D:=(N+2-4q)^2-4(2N-4q).
\]
Therefore, $\left\| e^{-tA}\right\|_{\R^2}\le Ce^{-\mu t}$, where
\[
\mu:=
\begin{cases}
\frac{N+2-4q-\sqrt{D}}{2}>0 & \textrm{if}\ 1\le q<q_{JL},\\
\frac{N+2-4q}{2}-\delta>0 & \textrm{if}\ q=q_{JL},\\
\frac{N+2-4q}{2}>0 & \textrm{if}\ q_{JL}<q<q_S,
\end{cases}
\]
and $\delta>0$ is small.
If $t<T$, then by (\ref{L1E1}) and (\ref{L1E6}) we have
\begin{align*}
\left\|\calF(\xi_2(t))-\calF(\xi_1(t))\right\|_{\R^2}&\le
\int_{-\8}^t\left\|e^{-(t-\tau)A}(G(\xi_2(\tau),\tau)-G(\xi_1(\tau),\tau))\right\|_{\R^2}d\tau\\
&\le C\int_{-\8}^te^{-\mu(t-\tau)}d\tau C\e\left\|\xi_2(t)-\xi_1(t)\right\|_X\\
&=\frac{C\e}{\mu}\left\|\xi_2(t)-\xi_1(t)\right\|_X.
\end{align*}
If $T<0$ is negatively large, then $\e>0$ can be chosen arbitrarily small.
Hence, there is $\kappa\in(0,1)$ such that $\left\|\calF(\xi_2)-\calF(\xi_1)\right\|_X\le\kappa\left\|\xi_2-\xi_1\right\|_X$ for $\xi_1$, $\xi_2\in B_{\e}$.
Since $f_0(0,0)=0$ and $|f_1(0,0,t)|=o(1)$ as $t\to -\infty$, we see that $\left\|\calF(0)\right\|_X=o(1)$.
Then,
\begin{equation}\label{L1E7}
\left\|\calF(\xi)\right\|_X\le\left\|\calF(\xi)-\calF(0)\right\|_X+\left\|\calF(0)\right\|_X\le\kappa\e+o(1)<\e
\end{equation}
provided that $T<0$ is negatively large.
Hence, $\calF$ is a contraction mapping on $B_{\e}$.
It follows from the contraction mapping theorem that $\calF$ has a unique fixed point in $B_{\e}$ which is a solution of $\xi=\calF(\xi)$.
When $T$ is negatively large, $\e>0$ can be taken small.
By (\ref{L1E7}) and the uniqueness of the fixed point in $B_{\e}$ we see that $\left\|\xi(t)\right\|_{\R^2}\to 0$ as $t\to -\8$.
Thus, $x(t)\to 0$ as $t\to -\8$.
Let $\theta(r):=x(\log r)$.
Then the conclusion holds.
\end{proof}

\section{Convergence to a solution of the limit problem}
Let $u(r,\rho)$ be the solution of (\ref{S1E2}).
By $\tu(s)$ we define
\begin{equation}\label{S4E1}
\tu(s):=F^{-1}[\lambda^{-2}F[u(r,\rho)]],\ \ s:=\frac{r}{\lambda},\ \ \textrm{and}\ \ \lambda:=\sqrt{\frac{F(\rho)}{F(1)}}.
\end{equation}
Then $\tu(s)$ satisfies
\begin{equation}\label{S4E0}
\begin{cases}
\tu''(s)+\frac{N-1}{s}\tu'(s)+f(\tu(s))+\frac{F(u(\lambda s,\rho))f'(u(\lambda s,\rho))-F(\tu(s))f'(\tu(s))}{F(\tu(s))f(\tu(s))}\tu'(s)^2=0, & s>0,\\
\tu(0)=1,\\
\tu'(0)=0.
\end{cases}
\end{equation}
First, we study the limit of $\tilde{u}(s)$ as $\rho\to\8$.
Note that $\lambda\downarrow 0$ as $\rho\to\8$.
We need the following proposition:
\begin{proposition}[{\cite[Theorem 2.4]{L94}}]\label{S4P1}
Let $h\in C^1(0,\8)\cap C[0,\8)$.
Assume that $N\ge 3$ and there exists $p>(N+2)/(N-2)$ such that
\begin{equation}\label{S4P1E1}
uh(u)\ge\left(1+p\right)H(u)\ \ \textrm{for large}\ \ u>0,
\end{equation}
where $H(u):=\int_0^uh(t)dt$.
Let $u(r,\rho)$ be the solution of the problem
\[
\begin{cases}
u''+\frac{N-1}{r}u'+h(u)=0, & r>0,\\
u(0)=\rho,\\
u'(0)=0,
\end{cases}
\]
and let $R(\rho)$ be the first positive zero of $u(\,\cdot\,,\rho)$ if it exists.
Then there are $\rho_0\in\R$ and $R_0>0$ such that $R(\rho)>R_0$ for $\rho>\rho_0$.
\end{proposition}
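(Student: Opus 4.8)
The plan is to argue by contradiction. If the conclusion fails, there is a sequence $\rho_n\to\infty$ with $R_n:=R(\rho_n)\to 0$. Write $u_n:=u(\,\cdot\,,\rho_n)$; this is a positive radial solution of $-\Delta u_n=h(u_n)$ on the ball $B_{R_n}\subset\R^N$ with $u_n=0$ on $\partial B_{R_n}$, and since $h>0$ forces $(r^{N-1}u_n')'=-r^{N-1}h(u_n)<0$ it is strictly decreasing in $r$. Let $M_0$ be a level above which $uh(u)\ge(1+p)H(u)$ holds. The first step is to read off a priori bounds from the Pohozaev identity: testing the equation with $x\cdot\nabla u_n$ and with $u_n$ over $B_{R_n}$ gives
\[
\frac{N-2}{2}\int_{B_{R_n}}u_nh(u_n)\,dx-N\int_{B_{R_n}}H(u_n)\,dx=-\frac{R_n}{2}\int_{\partial B_{R_n}}|\partial_\nu u_n|^2\,dS\le 0,\qquad \int_{B_{R_n}}|\nabla u_n|^2\,dx=\int_{B_{R_n}}u_nh(u_n)\,dx .
\]
Since $p>(N+2)/(N-2)$ there is $\varepsilon_1>0$ with $\tfrac{N-2}{2}sh(s)-NH(s)\ge\varepsilon_1 H(s)\ge\varepsilon_1 H(M_0)>0$ for $s\ge M_0$, while this quantity is bounded below on $[0,M_0]$; splitting the first identity over $\{u_n\ge M_0\}$ and its complement and using $|\{u_n<M_0\}\cap B_{R_n}|\le|B_{R_n}|$ yields constants independent of $n$ with $\int_{B_{R_n}}H(u_n)\,dx\le CR_n^N$, $\int_{B_{R_n}}|\nabla u_n|^2\,dx\le CR_n^N$, and $|\partial_\nu u_n|_{\partial B_{R_n}}\le C$.

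Next I would rescale to a fixed ball by $v_n(s):=u_n(R_ns)$, $s\in[0,1]$, getting $v_n''+\frac{N-1}{s}v_n'+R_n^2h(v_n)=0$, $v_n(0)=\rho_n$, $v_n'(0)=0$, $v_n(1)=0$, with renormalized energy $\int_0^1 s^{N-1}|v_n'(s)|^2\,ds\le CR_n^2\to 0$ while $\rho_n=\int_0^1|v_n'(s)|\,ds\to\infty$; the contradiction must come from the tension between these. Combining the elementary bound $|v_n'(s)|\le\tfrac{R_n^2}{N}h(\rho_n)\,s$ near $s=0$ (from $s^{N-1}v_n'(s)=-R_n^2\int_0^s t^{N-1}h(v_n)\,dt$, monotonicity of $v_n$, and of $h$ for large argument) with the renormalized energy bound and the monotonicity of $r^{N-1}|u_n'|$ on $\{\delta\le s\le1\}$ only gives $R_n\ge c\,\rho_n^{\alpha}h(\rho_n)^{-\beta}$ ($\alpha,\beta>0$), which is too weak in general. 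The essential input is the hypothesis in the sharp form $H(u)\ge c_0u^{1+p}$, allowing one to compare $u_n$, on a fixed annulus in the rescaled variable, with the self-similar profile of the associated scale-invariant problem ($-\Delta v=c_1v^p$ when $h$ is power-like, $-\Delta v=e^v$ in the borderline case): its regular solutions stay positive for all $r$ (since $p>(N+2)/(N-2)$) and converge locally to that profile, which forces $u(\,\cdot\,,\rho)$ to remain positive on a fixed interval $(0,r_0)$ for all large $\rho$, i.e.\ $R(\rho)\ge r_0$ — contradicting $R_n\to 0$.

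The part I expect to be the main obstacle is precisely that comparison step when $h$ is far from a power, especially for nonlinearities of exponential or faster growth, where the scale-invariant model is $-\Delta v=e^v$ and the naive a priori estimates are far too lossy. Handling it uniformly requires a careful ODE analysis — in effect an Emden--Fowler / generalized-similarity change of variables — showing that the appropriately rescaled $u(\,\cdot\,,\rho)$ converges to a fixed self-similar singular profile, rather than degenerating, as $\rho\to\infty$. Once that convergence is in hand, the remaining steps follow routinely.
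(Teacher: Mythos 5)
This proposition is quoted from Lin \cite[Theorem~2.4]{L94}; the paper invokes it as a black box and does not reprove it, so there is no internal proof to compare against. The question is therefore simply whether your sketch is itself a valid proof, and it is not yet one.

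Your a priori estimates are correct. The Pohozaev identity on $B_{R_n}$, the quantitative use of the hypothesis via $\tfrac{N-2}{2}uh(u)-NH(u)\ge\epsilon_1 H(u)$ for $u\ge M_0$ with $\epsilon_1=\tfrac{(N-2)(1+p)}{2}-N>0$, the resulting bound $\int_{B_{R_n}}H(u_n)\,dx\le CR_n^N$, and the boundary bound $|u_n'(R_n)|\le C$ are all sound. The difficulty is that, as you yourself observe, combining these with the naive bound $|v_n'(s)|\le \tfrac{R_n^2}{N}h(\rho_n)\,s$ and the rescaled energy estimate yields only $R_n\ge c\,\rho_n^{\alpha}h(\rho_n)^{-\beta}$, and a direct check (e.g.\ $h(u)=u^p$ with $p>p_S$, where the argument gives $R_n\ge c\rho_n^{(N+2-(N-2)p)/(2N)}$) shows this lower bound tends to zero as $\rho_n\to\infty$. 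There is no contradiction at this stage, and the gap you flag is genuine rather than a routine omission.

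The fix you gesture at --- comparing the rescaled $u_n$ on a fixed annulus with a self-similar profile --- is essentially the content of Lemma~\ref{S4L1} in the paper, but that lemma's proof invokes Proposition~\ref{S4P1} to establish the uniform blow-up $u(\lambda s,\rho)\to\infty$ on compact $s$-intervals; relying on it here would be circular. If instead you intend an independent comparison (sub/supersolution arguments after rescaling), that is exactly the non-routine step and must be spelled out; note that the hypothesis $uh(u)\ge(1+p)H(u)$ only forces a one-sided polynomial bound $H(u)\ge c_0u^{1+p}$, while $h$ itself may grow super-exponentially, so there is no single scale-invariant model equation against which to compare uniformly, and the remark that ``the remaining steps follow routinely'' is overly optimistic. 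In short, the sketch sets up reasonable estimates but stops at, rather than resolves, the core difficulty; the missing argument is the substance of \cite[Theorem~2.4]{L94}, which should be consulted directly.
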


\begin{lemma}\label{S4L1}
Suppose that $N\ge 3$ and (\ref{f1}) and (\ref{f2}) with $q<q_S$ holds.
Let $v(s,1)$ be the solution of (\ref{LP}) with $\sigma=1$. Then
\[
\tu(s)\to v(s,1)\ \ \textrm{in}\ \ C_{loc}[0,\8)\ \ \textrm{as}\ \ \rho\to\8.
\]
\end{lemma}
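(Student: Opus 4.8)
The plan is to show that the family $\{\tu(\cdot,\rho)\}_{\rho}$ is bounded in $C^2_{loc}[0,\infty)$ as $\rho\to\infty$, extract a locally uniformly convergent subsequence by Arzel\`a--Ascoli, identify the limit as a solution of the limit problem (\ref{LP}) with $\sigma=1$, and then invoke uniqueness of the initial value problem (\ref{LP}) to upgrade subsequential convergence to full convergence. The key observation that makes the limit equation appear is that in (\ref{S4E0}) the coefficient of $\tu'^2$ is
\[
\frac{F(u(\lambda s,\rho))f'(u(\lambda s,\rho))-F(\tu(s))f'(\tu(s))}{F(\tu(s))f(\tu(s))},
\]
and as $\rho\to\infty$ (hence $\lambda\downarrow 0$) the argument $u(\lambda s,\rho)=F^{-1}[\lambda^2 F(\tu(s))]\to\infty$ on every compact $s$-set where $\tu$ stays bounded, so that $F(u(\lambda s,\rho))f'(u(\lambda s,\rho))\to q$ by (\ref{L1E3+}); meanwhile $F(\tu(s))f'(\tu(s))$ is exactly the term that stays put. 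Thus the coefficient converges pointwise to $\bigl(q-F(\tu)f'(\tu)\bigr)/\bigl(F(\tu)f(\tu)\bigr)$, which is precisely the coefficient in (\ref{LP}).

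First I would establish uniform $C^0$ bounds on compact $s$-intervals. Since $\tu(0)=1$, $\tu'(0)=0$ and $f>0$, the solution $\tu$ is initially decreasing; the danger is that $\tu$ could blow up or go to $-\infty$ (in the (\ref{f12}) case) before a fixed time. To rule out blow-up of $\tu$ upward, note that if $\tu$ were to increase past $1$ it would have to have a positive local minimum first, which is impossible since at an interior minimum $\tu''\ge 0$ while $f(\tu)>0$ forces $\tu''+f(\tu)+(\text{coeff})\tu'^2<0$ there once we control the sign. The cleaner route is to translate back to the original equation: $u(r,\rho)$ solves (\ref{S1E2}), and Proposition~\ref{S4P1} applied with $h=f$ (whose growth exponent $p=q/(q-1)>(N+2)/(N-2)$ precisely when $q<q_S$, and the superlinearity hypothesis (\ref{S4P1E1}) follows from (\ref{f2}) via the computation $p=\lim uf'(u)/f(u)$ given in the introduction) gives a fixed $R_0>0$ and $\rho_0$ such that $u(\cdot,\rho)>0$ on $(0,R_0)$ for all $\rho>\rho_0$. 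Rescaling, $\tu(s)>0$ on $(0,R_0/\lambda)$, and since $R_0/\lambda\to\infty$, for any fixed $S>0$ we have $\tu>0$ on $(0,S)$ eventually; combined with monotonicity considerations and an energy/Pohozaev-type bound for (\ref{S4E0}) this yields $0<\tu(s)\le 1$ and a lower bound on $[0,S]$, hence $F(\tu(s))$ bounded above and below, hence all the coefficients in (\ref{S4E0}) bounded, hence $\tu''$ bounded, hence $C^2_{loc}$ bounds.

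With uniform $C^2_{loc}$ bounds in hand, Arzel\`a--Ascoli gives a subsequence $\tu(\cdot,\rho_j)\to w$ in $C^1_{loc}[0,\infty)$ with $w(0)=1$, $w'(0)=0$. Passing to the limit in (\ref{S4E0}) using the pointwise convergence of the coefficient described above (uniform on compacts, since $\tu$ and $F(\tu)$ are uniformly bounded away from their singular values there), $w$ solves (\ref{LP}) with $\sigma=1$. Finally, since the right-hand side of (\ref{LP}), written as a first-order system in $(v,v')$, is locally Lipschitz in $(v,v')$ away from $s=0$ and the behavior near $s=0$ is the standard regular-singular point of a radial ODE (so the solution with $v(0)=1$, $v'(0)=0$ is unique — this is where one uses that $F(v)f(v)$ is bounded away from $0$ near $v=1$ and a Banach fixed point argument on a small interval, exactly as in the theory of (\ref{S1E2})), we conclude $w=v(\cdot,1)$. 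Since every subsequence has a further subsequence converging to the same limit, the full family converges: $\tu(s)\to v(s,1)$ in $C_{loc}[0,\infty)$.

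The main obstacle I anticipate is the a priori bound: showing that $\tu$ cannot escape — either blowing up or, in the exponential-type case (\ref{f12}), diving to $-\infty$ where $F$ is infinite and the coefficient degenerates — on a fixed time interval, uniformly in $\rho$. Proposition~\ref{S4P1} handles the positivity (upper control via no early zero of $u$), but one still needs a uniform \emph{upper} bound on $\tu$ and a uniform \emph{lower} bound keeping $F(\tu)$ finite; the natural tool is the energy functional associated to (\ref{S4E0}), which because of the scaling structure (if $v$ solves (\ref{LP}) so does $F^{-1}[\lambda^{-2}F(v(\lambda s))]$) should be monotone along trajectories and give exactly such bounds, mirroring the classical argument for $\Delta u+u^p=0$.
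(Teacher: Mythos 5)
Your compactness route (uniform bounds, Arzel\`a--Ascoli, pass to the limit, uniqueness) is in principle a legitimate alternative, and your key heuristic observation --- that $u(\lambda s,\rho)=F^{-1}[\lambda^2F(\tu(s))]\to\infty$ wherever $\tu$ stays bounded, so the coefficient of $\tu'^2$ converges to the one in (\ref{LP}) --- is exactly the right thing to notice. However, the a priori bound step, which your plan hinges on, is not actually established, and the ingredients you propose do not suffice. Applying Proposition~\ref{S4P1} with $h=f$ gives only $u(\cdot,\rho)>0$ on a fixed interval. In case (\ref{f12}) this does not even imply $\tu>0$ (there $F$ is a bijection from $\R$ onto $(0,\infty)$, so $\tu=F^{-1}[\lambda^{-2}F(u)]$ may be very negative even when $u>0$). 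More importantly, in either case $u>0$ is far from controlling $F(\tu(s))=\lambda^{-2}F(u(\lambda s,\rho))$ from above, because of the unbounded factor $\lambda^{-2}$. The ``energy/Pohozaev-type bound'' you invoke for the remaining control is not carried out, and it is not obvious it can be, because of the $\tu'^2$ term and the $s$-dependent coefficient. Your observation $u(\lambda s,\rho)\to\infty$ where $\tu$ is bounded is also circular as a route to the a priori bound: you need $\tu$ bounded to conclude the coefficient converges, and you need the coefficient to converge (or some replacement) to bound $\tu$.

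The paper breaks this circularity by a different use of Proposition~\ref{S4P1}: it applies it to the \emph{shifted} nonlinearity $f_M(u):=f(u+M)$, with $u_M:=u-M$ in place of $u$, to conclude $u(r,\rho)\ge M$ on a fixed interval $[0,r_M]$ for all $\rho$ large. (The superlinearity hypothesis (\ref{S4P1E1}) is verified for $f_M$ using $\lim_{u\to\infty}uf(u)/F_0(u)=1+q/(q-1)>1+(N+2)/(N-2)$ when $1<q<q_S$, and $=\infty$ when $q=1$.) Since $\lambda\to 0$, this gives $u(\lambda s,\rho)\ge M$ on $[0,s_0]$ for $\rho$ large; letting $M\to\infty$ yields the key uniform divergence $u(\lambda s,\rho)\to\infty$ on $[0,s_0]$, entirely independently of any knowledge about $\tu$. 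From this $F(u(\lambda s,\rho))f'(u(\lambda s,\rho))\to q$ uniformly, so (\ref{S4E0}) is an initial value problem for $\tu$ whose nonlinearity converges to that of (\ref{LP}) on $[0,s_0]$, and the conclusion follows by continuous dependence of the solution on the nonlinearity --- no separate compactness argument, and no independent a priori bound on $\tu$, is needed, since continuous dependence already forces $\tu$ to stay near $v(\cdot,1)$.
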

\begin{proof}
Let $s_0>0$ be fixed.
We show that
\begin{equation}\label{S4L1E1}
u(\lambda s,\rho)\to\8\ \ \textrm{uniformly in}\ s\in [0,s_0]\ \textrm{as}\ \rho\to\8.
\end{equation}
Let $M>0$ be large.
Let $u_M(r):=u(r,\rho)-M$.
Then $u_M$ satisfies
\[
\begin{cases}
u_M''+\frac{N-1}{r}u_M'+f_M(u_M)=0, & r>0,\\
u_M(0)=\rho-M,\\
u_M'(0)=0,
\end{cases}
\]
where $f_M(u):=f(u+M)$.
Because of (\ref{f1}) and (\ref{f2}), we easily see that $f(u)\to\8$ as $u\to\8$.
There is $c>0$ such that $f_M(u)>c$ for $u\ge 0$.
Hence, $u_M$ has the first positive zero when $\rho>M$.
Let $F_0(u):=\int_0^uf(t)dt$.
We consider the case $1<q<q_S$.
Using L'Hospital rule twice, we have
\[
\lim_{u\to\8}\frac{uf(u)}{F_0(u)}=\lim_{u\to\8}\left(1+\frac{u}{\frac{f(u)}{f'(u)}}\right)=\lim_{u\to\8}\left(1+\frac{1}{1-\frac{f(u)f''(u)}{f'(u)^2}}\right)=1+\frac{q}{q-1}.
\]
Since $1<q<q_S$, we see that $1+q/(q-1)>1+(N+2)/(N-2)$.
Therefore, $f_M$ satisfies (\ref{S4P1E1}) in Proposition~\ref{S4P1} if $M>0$ is large.
We consider the case $q=1$.
Then $\lim_{u\to\8}uf(u)/F_0(u)=\8$, and hence $f_M$ satisfies (\ref{S4P1E1}).
In both cases we see by Proposition~\ref{S4P1} that there are $\rho_M>0$ and $r_M>0$ such that if $\rho>\rho_M$, then $u(r,\rho)\ge M$ for $0\le r\le r_M$.
If $\rho>0$ is sufficiently large, then $u(\lambda s,\rho)\ge M$ for $0\le s\le s_0$, because $\lim_{\rho\to\8}\lambda=0$ and $s_0<r_M/\lambda$.
Since $M>0$ can be chosen arbitrarily large, (\ref{S4L1E1}) is proved, and hence,
\[
F(u(\lambda s,\rho))f'(u(\lambda s,\rho))\to q\ \ \textrm{uniformly in}\ s\in [0,s_0]\ \textrm{as}\ \rho\to\8.
\]
Because of the continuity of $\tu(s)\in C[0,s_0)$ with respect to the nonlinearity in (\ref{S4E0}), we see that $\tu(s)\to v(s,1)$ in $C[0,s_0)$ as $\rho\to\8$.
Since $s_0>0$ can be chosen arbitrarily, we obtain the conclusion.
\end{proof}
Because of Lemma~\ref{S4L1}, the limit equation of (\ref{S1EQ}) as $\rho\to\infty$ becomes (\ref{LE}) when $u$ and $v$ are related by (\ref{S1E3}) with $\lambda=\sqrt{F(\rho)/F(1)}$.

Second, we study the limit of the rescaled singular solution as $\lambda\downarrow 0$.
\begin{lemma}\label{S4L2}
Suppose that $N\ge 3$ and (\ref{f1}) and (\ref{f2}) with $q<q_S$ hold.
Let $u^*(r)$ be the singular solution given by Lemma~\ref{S3L1}, and let $s$ and $\lambda$ be defined by (\ref{S4E1}).
Let $\tu^*(s):=F^{-1}[\lambda^{-2}F[u^*(r)]]$. Then
\[
\tu^*(s)\to v^*(s)\ \ \textrm{in}\ \ C_{loc}(0,\8)\ \ \textrm{as}\ \ \rho\to\8,
\]
where $v^*(s)=F^{-1}[k^{-1}s^2]$ which is defined by (\ref{SSV}).
\end{lemma}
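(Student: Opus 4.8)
The plan is to use the explicit form of the singular solution supplied by Lemma~\ref{S3L1} and pass to the limit directly. Write $u^*(r)=F^{-1}[k^{-1}r^2(1+\theta(r))]$ on $(0,r_0)$, so that $F[u^*(r)]=k^{-1}r^2(1+\theta(r))$ with $\theta\in C^2(0,r_0)$, $\theta(r)\to 0$ as $r\downarrow 0$, and $k=2N-4q>0$ (indeed $q<q_S=(N+2)/4$ together with $N\ge 3$ gives $4q<N+2<2N$). Recall $\lambda=\sqrt{F(\rho)/F(1)}$, so $\lambda\downarrow 0$ as $\rho\to\8$ by (\ref{f1}). Fix a compact interval $[s_1,s_2]\subset(0,\8)$. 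For all $\lambda$ small enough we have $\lambda s_2<r_0$, hence $\tu^*(s)=F^{-1}[\lambda^{-2}F[u^*(\lambda s)]]$ is well defined for $s\in[s_1,s_2]$, and substituting the formula for $u^*$ gives
\[
F[\tu^*(s)]=\lambda^{-2}F[u^*(\lambda s)]=k^{-1}s^2\bigl(1+\theta(\lambda s)\bigr),\qquad s\in[s_1,s_2].
\]

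Next I would control $\theta(\lambda s)$ uniformly. Since $\theta$ is continuous on $(0,r_0)$ and $\theta(r)\to 0$ as $r\downarrow 0$, the quantity $\omega(\delta):=\sup_{0<r\le\delta}|\theta(r)|$, defined for $\delta\in(0,r_0)$, satisfies $\omega(\delta)\to 0$ as $\delta\downarrow 0$; hence $\sup_{s\in[s_1,s_2]}|\theta(\lambda s)|\le\omega(\lambda s_2)\to 0$ as $\lambda\downarrow 0$. Consequently $F[\tu^*(s)]\to k^{-1}s^2=F[v^*(s)]$ uniformly on $[s_1,s_2]$, and for small $\lambda$ the values $F[\tu^*(s)]$ remain in a fixed compact subinterval of $(0,\8)$ (e.g. $[\tfrac12 k^{-1}s_1^2,\tfrac32 k^{-1}s_2^2]$). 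By (\ref{f1}) the function $F$ is a continuous strictly decreasing bijection onto $(0,\8)$, so $F^{-1}$ is continuous on $(0,\8)$, hence uniformly continuous on that compact subinterval. Applying $F^{-1}$ to the uniform convergence above yields
\[
\tu^*(s)=F^{-1}\bigl[F[\tu^*(s)]\bigr]\longrightarrow F^{-1}[k^{-1}s^2]=v^*(s)\qquad\text{uniformly on }[s_1,s_2].
\]
Since $[s_1,s_2]\subset(0,\8)$ was arbitrary, $\tu^*\to v^*$ in $C_{loc}(0,\8)$, which is the assertion.

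There is no serious obstacle here: the argument is a rescaling identity combined with the boundary behaviour $\theta(r)\to 0$ from Lemma~\ref{S3L1}. The only points demanding care are that the rescaled argument $\lambda s$ stays inside the domain $(0,r_0)$ of $\theta$ for small $\lambda$, uniformly over the compact $s$-interval, and that the convergence of $F[\tu^*]$ is uniform in $s$ so that it transfers through $F^{-1}$ (legitimate because the values stay in a compact subset of $(0,\8)$, on which $F^{-1}$ is uniformly continuous). If convergence in a stronger norm $C^k_{loc}(0,\8)$ were needed, one could supplement this $C^0$ convergence with interior ODE estimates for the rescaled equation of type (\ref{S4E0}) satisfied by $\tu^*$, whose nonlinear coefficient converges locally uniformly to that of the limit equation (\ref{LE}) because $u^*(\lambda s)\to\8$.
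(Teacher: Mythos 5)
Your proof is correct and takes essentially the same route as the paper: substitute the explicit form $u^*(r)=F^{-1}[k^{-1}r^2(1+\theta(r))]$ from Lemma~\ref{S3L1} into the similarity transformation, observe $F[\tu^*(s)]=k^{-1}s^2(1+\theta(\lambda s))$, and send $\lambda\downarrow 0$ using $\theta(r)\to 0$. You supply a bit more detail than the paper (the modulus $\omega(\delta)$ for uniform control of $\theta(\lambda s)$, the check that $\lambda s$ stays inside $(0,r_0)$, and the uniform continuity of $F^{-1}$ on a compact subinterval so the convergence passes through $F^{-1}$), but these are exactly the tacit justifications behind the paper's terse conclusion.
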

\begin{proof}
Since $u^*(r)=F^{-1}[k^{-1}r^2(1+\theta(r))]$, we have
\[
\tu^*(s)=F^{-1}[\lambda^{-2}F[F^{-1}[k^{-1}(\lambda s)^2(1+\theta(\lambda s))]]]=F^{-1}[k^{-1}s^2(1+\theta(\lambda s))].
\]
Let $0<s_0<s_1$ be fixed.
Since $\theta(r)\to 0$ as $r\downarrow 0$, we see that $\theta(\lambda s)\to 0$ uniformly in $s\in [s_0,s_1]$ as $\rho\to\8$.
Thus, $\tu^*(s)\to F^{-1}[k^{-1}s^2]$ uniformly in $s\in[s_0,s_1]$ as $\rho\to\8$.
Since $s_0$ and $s_1$ can be chosen arbitrarily under the condition $0<s_0<s_1$, we obtain the conclusion.
\end{proof}
Because of Lemmas~\ref{S4L1} and \ref{S4L2}, the limit functions of $\tu(s)$ and $\tu^*(s)$ satisfy (\ref{LE}).

\section{Proof of Theorem~\ref{THB}}
Let $v(s,\sigma)$ be the solution of (\ref{LP}), and let $w(s,\tau)$ and $\tau$ be defined by (\ref{S1E5}).
Note that $\tau=F_q^{-1}[F[\sigma]]$ and $\tau$ is strictly increasing in $\sigma$.
\begin{lemma}\label{S5L0}
Suppose that $N\ge 3$.
The following two statements hold:\\
(i) If $q>1$, then $w$ satisfies
\begin{equation}\label{S5E1}
\begin{cases}
w''+\frac{N-1}{s}w'+w^p=0, & s>0,\\
w(0)=\tau,\\
w'(0)=0
\end{cases}
\end{equation}
and 
\begin{equation}\label{S5E1+}
w^*(s):=F_q^{-1}[F[v^*(s)]]=\left\{\frac{2}{p-1}\left(N-2-\frac{2}{p-1}\right)\right\}^{1/(p-1)}s^{-\frac{2}{p-1}}
\end{equation}
is a singular solution of the equation in (\ref{S5E1}).
Here, $p:=q/(q-1)$.\\
(ii) If $q=1$, then $w$ satisfies
\begin{equation}\label{S5E2}
\begin{cases}
w''+\frac{N-1}{s}w'+e^w=0, & s>0,\\
w(0)=\tau,\\
w'(0)=0
\end{cases}
\end{equation}
and
\begin{equation}\label{S5E2+}
w^*(s):=F_q^{-1}[F[v^*(s)]]=-2\log s+\log 2(N-2)
\end{equation}
is a singular solution of the equation in (\ref{S5E2}).
\end{lemma}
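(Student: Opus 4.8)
The plan is to work directly from the defining relation of the generalized Cole--Hopf transformation (\ref{S1E5}). Writing $w=w(s,\tau)$, $v=v(s,\sigma)$, the identity $w=F_q^{-1}[F[v]]$ is the same as $F_q[w]=F[v]$, and since $F'=-1/f$ and $F_q'=-1/f_q$, differentiating once in $s$ gives the first-order relation
\[
\frac{w'}{f_q(w)}=\frac{v'}{f(v)} ,
\]
and differentiating once more gives
\[
\frac{w''}{f_q(w)}-\frac{f_q'(w)}{f_q(w)^2}\,w'^2=\frac{v''}{f(v)}-\frac{f'(v)}{f(v)^2}\,v'^2 .
\]
Together with the obvious $F(v)=F_q[w]$, these relations let me transport the equation in (\ref{LP}) from the $v$ side to the $w$ side.

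First I would substitute $v''=-\frac{N-1}{s}v'-f(v)-\frac{q-F(v)f'(v)}{F(v)f(v)}v'^2$ from (\ref{LP}) into the right-hand side of the second-order relation. The two $v'^2$-terms combine via
\[
-\frac{q-F(v)f'(v)}{F(v)f(v)^2}-\frac{f'(v)}{f(v)^2}=-\frac{q}{F(v)f(v)^2} ,
\]
so the right-hand side becomes $-\frac{N-1}{s}\frac{v'}{f(v)}-1-\frac{q}{F(v)f(v)^2}v'^2$. Replacing $v'/f(v)$ by $w'/f_q(w)$, $v'^2/f(v)^2$ by $w'^2/f_q(w)^2$ and $F(v)$ by $F_q[w]$, and multiplying through by $f_q(w)$, one arrives at
\[
w''+\frac{N-1}{s}w'+f_q(w)=\frac{w'^2}{f_q(w)}\left(f_q'(w)-\frac{q}{F_q[w]}\right) .
\]

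The crux is then the elementary identity $F_q[w]\,f_q'(w)\equiv q$, valid for both model nonlinearities: for $q>1$, $F_q[w]=\frac{1}{p-1}w^{1-p}$ and $f_q'(w)=pw^{p-1}$, hence $F_q[w]f_q'(w)=\frac{p}{p-1}=q$ since $p=q/(q-1)$; for $q=1$, $F_q[w]=e^{-w}$ and $f_q'(w)=e^w$, hence $F_q[w]f_q'(w)=1=q$. Thus the bracket on the right vanishes and $w$ solves $w''+\frac{N-1}{s}w'+f_q(w)=0$; the initial conditions $w(0)=F_q^{-1}[F[\sigma]]=\tau$ and, from the first-order relation together with $v'(0)=0$, $w'(0)=0$ are immediate. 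There is no real obstacle here: the zeroth-order correction in (\ref{LE}) has been reverse-engineered exactly so that (\ref{LE}) is the pushforward of $\Delta w+f_q(w)=0$ under the inverse Cole--Hopf map, and this cancellation is the substance of the lemma. The only point to watch is the (routine) regularity claim that $w\in C^2$ near $s=0$, which follows from the smoothness of $F$ and $F_q$ near the relevant values and $v\in C^2[0,s_0)$ with $v'(0)=0$.

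For the singular solutions I would simply evaluate $w^*(s)=F_q^{-1}[F[v^*(s)]]=F_q^{-1}[k^{-1}s^2]$ using $k=2N-4q$. When $q>1$, solving $\frac{1}{p-1}w^{1-p}=k^{-1}s^2$ gives $w^*(s)=\left(\frac{k}{p-1}\right)^{1/(p-1)}s^{-2/(p-1)}$, and the algebraic identity $\frac{k}{p-1}=\frac{2Np-4p-2N}{(p-1)^2}=\frac{2}{p-1}\left(N-2-\frac{2}{p-1}\right)$, which uses $q=p/(p-1)$, turns this into (\ref{S5E1+}). When $q=1$, $k=2(N-2)$ and solving $e^{-w}=k^{-1}s^2$ gives $w^*(s)=-2\log s+\log 2(N-2)$, i.e.\ (\ref{S5E2+}). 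That these profiles actually solve the equations in (\ref{S5E1}) and (\ref{S5E2}) on $(0,\infty)$ then follows either from the same pointwise computation applied to the exact singular solution $v^*$ of (\ref{LE}), or from a one-line direct substitution of the power (resp.\ logarithmic) profile.
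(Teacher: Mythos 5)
Your computation is correct and is exactly the ``direct calculation'' the paper declares and then omits: you push the equation (\ref{LP}) through the two derivative identities coming from $F_q[w]=F[v]$, observe that the first-order correction in (\ref{LE}) collapses to $-q/(F(v)f(v)^2)\,v'^2$, and then use the elementary identity $F_q[w]f_q'(w)\equiv q$ (for both model nonlinearities) to kill the remaining $w'^2$ term, with the singular-solution formulas following from inverting $F_q$ at $k^{-1}s^2$ and the algebraic identity $k/(p-1)=\frac{2}{p-1}\bigl(N-2-\frac{2}{p-1}\bigr)$. This matches the paper's intended route; no gaps.
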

\begin{proof}
By direct calculation we can obtain the conclusions.
We omit the proof.
\end{proof}
The following two propositions are well known:
\begin{proposition}\label{S5P1}
Suppose that $N\ge 3$.
Let $0<\tau_0<\tau_1$.
Let $w(s,\tau_i)$, $i=0,1$, be solutions of (\ref{S5E1}) with $\tau=\tau_i$, and let $w^*(s)$ be the singular solution given by (\ref{S5E1+}).
Then the following holds:\\
(i) If $p=p_S$, then $\calZ_{(0,\8)}[w(\,\cdot\,,\tau_0)-w(\,\cdot\,,\tau_1)]=1$ and $\calZ_{(0,\8)}[w(\,\cdot\,,\tau_0)-w^*(\,\cdot\,)]=2$.\\
(ii) If $p_{S}<p<p_{JL}$, then $\calZ_{(0,\8)}[w(\,\cdot\,,\tau_0)-w(\,\cdot\,,\tau_1)]=\8$ and $\calZ_{(0,\8)}[w(\,\cdot\,,\tau_0)-w^*(\,\cdot\,)]=\8$.\\
(iii) If $p\ge p_{JL}$, then $\calZ_{(0,\8)}[w(\,\cdot\,,\tau_0)-w(\,\cdot\,,\tau_1)]=0$ and $\calZ_{(0,\8)}[w(\,\cdot\,,\tau_0)-w^*(\,\cdot\,)]=0$.
\end{proposition}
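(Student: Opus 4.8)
This assertion is classical, going back to Joseph--Lundgren~\cite{JL73} and Wang~\cite{W93}; I describe the route I would take. The plan is to pass to Emden--Fowler variables. Writing $w(s)=L\,s^{-m}\,Y(t)$ with $m:=2/(p-1)$, $t:=\log s$, and $L$ the constant in \eqref{S5E1+} (so that the singular solution $w^*$ becomes the constant $Y\equiv 1$), the equation in \eqref{S5E1} becomes the autonomous ODE
\[
Y''+\alpha Y'+\beta\bigl(Y^{p}-Y\bigr)=0,\qquad \alpha:=N-2-\tfrac{4}{p-1},\quad \beta:=\tfrac{2}{p-1}\Bigl(N-2-\tfrac{2}{p-1}\Bigr)>0 .
\]
In the $(Y,Y')$ phase plane this has a saddle at $(0,0)$, whose positive unstable eigenvalue is exactly $m$, and an equilibrium at $(1,0)$. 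A regular solution $w(\,\cdot\,,\tau)$, being bounded at the origin, corresponds to the unique (up to translation in $t$) orbit leaving $(0,0)$ along the $\{Y>0\}$ branch of its unstable manifold; moreover the scaling $w(s,\tau)=\tau\,w\bigl(\tau^{(p-1)/2}s,1\bigr)$ shows that the orbits attached to different $\tau$ are precisely the time translates of one another. Hence $\calZ_{(0,\8)}[w(\,\cdot\,,\tau_0)-w^*]$ equals the number of $t\in\R$ with $Y(t)=1$, while $\calZ_{(0,\8)}[w(\,\cdot\,,\tau_0)-w(\,\cdot\,,\tau_1)]$ equals the number of $t$ with $Y(t)=Y(t+c)$ for a fixed $c=c(\tau_0,\tau_1)\ne 0$.

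The three regimes are then read off from the behaviour of the orbit as $t\to+\8$, governed by the linearization at $(1,0)$, with eigenvalues $\mu^{2}+\alpha\mu+\beta(p-1)=0$. (i) If $p=p_S$ then $\alpha=0$, so the ODE has the first integral $E=\tfrac12(Y')^{2}-\tfrac{\beta}{2}Y^{2}+\tfrac{\beta}{p+1}Y^{p+1}$; the orbit lies on $\{E=0\}$, which (in $Y\ge 0$) is the homoclinic loop at $(0,0)$, and it attains a single interior maximum $Y_{*}:=\bigl((p+1)/2\bigr)^{1/(p-1)}>1$. It therefore meets $Y=1$ exactly twice, giving $\calZ_{(0,\8)}[w(\,\cdot\,,\tau_0)-w^*]=2$; and for two regular solutions I would use the explicit bubbles $w(s,\tau)=\tau\bigl(1+\kappa\,\tau^{p-1}s^{2}\bigr)^{-(N-2)/2}$ ($\kappa=\kappa(N,p)>0$): raising $w(s,\tau_0)=w(s,\tau_1)$ to the power $-2/(N-2)$ makes it affine in $s^{2}$, so there is exactly one positive root and $\calZ_{(0,\8)}[w(\,\cdot\,,\tau_0)-w(\,\cdot\,,\tau_1)]=1$. (ii) If $p_S<p<p_{JL}$ then the discriminant $\alpha^{2}-4\beta(p-1)$ is negative --- this is exactly the condition $p<p_{JL}$ --- so $(1,0)$ is a stable focus; by Poincar\'e--Bendixson (there are no periodic orbits since $E$ strictly decreases off equilibria, and the orbit cannot be homoclinic since $\alpha>0$ forces $E(+\8)<0$) the bounded orbit spirals into $(1,0)$, crossing $Y=1$ infinitely often, so $\calZ_{(0,\8)}[w(\,\cdot\,,\tau_0)-w^*]=\8$; applying the same oscillation to $Y(t)-Y(t+c)$, which near $+\8$ equals $e^{t\Re\mu}$ times a nontrivial trigonometric polynomial of frequency $\Im\mu$, gives infinitely many intersections of the two regular solutions too. (iii) If $p\ge p_{JL}$ the eigenvalues at $(1,0)$ are real negative, $(1,0)$ is a stable node, and the Joseph--Lundgren a priori bound $w(s,\tau)<w^*(s)$ for all $s>0$ holds --- equivalently $Y(t)<1$ for all $t$; since a zero of $Y'$ in $\{0<Y<1\}$ is necessarily a local minimum (there $Y''=\beta Y(1-Y^{p-1})>0$), the orbit is in fact monotone, the regular solutions are ordered ($\tau_0<\tau_1\Rightarrow w(\,\cdot\,,\tau_0)<w(\,\cdot\,,\tau_1)$), and both intersection numbers vanish.

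The linearization computations, the bubble algebra, and the various phase-plane checks I would relegate to the cited references. The one genuinely delicate point is the a priori bound in case (iii): the energy identity $E'=-\alpha(Y')^{2}\le 0$ only gives $Y\le Y_{*}$, which is valid for all $p>p_S$ and does not see $p_{JL}$; upgrading this to $Y<1$ requires the sharper invariant-region analysis at the node $(1,0)$ --- trapping the heteroclinic orbit between the stable manifold of $(1,0)$ and the nullclines --- which is where the real work lies, or one simply invokes~\cite{JL73,W93}.
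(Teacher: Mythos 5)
The paper offers no proof of Proposition~\ref{S5P1}; it simply refers the reader to \cite{JL73,MT16,W93}. Your Emden--Fowler reduction to the autonomous system $Y''+\alpha Y'+\beta(Y^p-Y)=0$ in $t=\log s$, followed by a phase-plane analysis around the saddle $(0,0)$ and the equilibrium $(1,0)$, is precisely the classical argument in those references, so you are reconstructing the intended proof rather than taking a different route. Your computations check out: $\alpha=N-2-4/(p-1)$ and $\beta=\tfrac{2}{p-1}(N-2-\tfrac{2}{p-1})$ are correct, the characteristic equation at $(1,0)$ is $\mu^2+\alpha\mu+\beta(p-1)=0$, and its discriminant $\alpha^2-4\beta(p-1)$ vanishes exactly at $p=p_{JL}$ (for $N\ge 11$); the energy identity $E'=-\alpha(Y')^2$ rules out homoclinic return for $p>p_S$ and gives the bound $Y\le Y_*$; the explicit Aubin--Talenti bubble handles the regular-regular count for $p=p_S$. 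You also correctly flag the one place where the argument is not soft: the separation $Y(t)<1$ (equivalently $w(\,\cdot\,,\tau)<w^*$) in case (iii) does not follow from the energy bound alone and requires the trapping-region analysis at the stable node, which is exactly what \cite{JL73} and \cite{W93} supply. Given that the paper itself outsources the whole proposition to these references, your write-up, including the honest appeal to them for that last step, is both correct and complete at the level the paper operates.
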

See \cite{JL73,MT16,W93} for example.

\begin{proposition}\label{S5P2}
Suppose that $N\ge 3$.
Let $\tau_0<\tau_1$.
Let $w(s,\tau_i)$, $i=0,1$, be solutions of (\ref{S5E2}) with $\tau=\tau_i$, and let $w^*(s)$ be the singular solution given by (\ref{S5E2+}).
Then the following holds:\\
(i) If $3\le N\le 9$, then $\calZ_{(0,\8)}[w(\,\cdot\,,\tau_0)-w(\,\cdot\,,\tau_1)]=\8$ and $\calZ_{(0,\8)}[w(\,\cdot\,,\tau_0)-w^*(\,\cdot\,)]=\8$.\\
(ii) If $N\ge 10$, then $\calZ_{(0,\8)}[w(\,\cdot\,,\tau_0)-w(\,\cdot\,,\tau_1)]=0$ and $\calZ_{(0,\8)}[w(\,\cdot\,,\tau_0)-w^*(\,\cdot\,)]=0$.
\end{proposition}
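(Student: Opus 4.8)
The plan is to strip the scaling degeneracy of $\Delta w+e^w=0$ by an Emden--Fowler change of variable and read off both intersection numbers from the phase portrait near the unique equilibrium. Put $t:=\log s$ and $z(t):=w(s)+2t$, so $w(s)=z(\log s)-2\log s$. A short computation turns the radial equation in \eref{S5E2} into the autonomous equation
\[
z''+(N-2)z'+e^{z}-2(N-2)=0,
\]
whose only equilibrium is $z\equiv\bar z:=\log\big(2(N-2)\big)$, and this constant corresponds exactly to the singular solution $w^{*}$ in \eref{S5E2+}. The scaling invariance $w(s)\mapsto w(\mu s)+2\log\mu$ of \eref{S5E2} becomes the time translation $z\mapsto z(\,\cdot\,+\log\mu)$, and since by uniqueness $w(s,\tau_{1})=w(e^{c}s,\tau_{0})+2c$ with $c:=(\tau_{1}-\tau_{0})/2>0$, the $z$-orbit $z_{1}$ of $\tau_{1}$ is the translate $z_{1}(t)=z_{0}(t+c)$ of the orbit $z_{0}$ of $\tau_{0}$. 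Using the bijection $s\leftrightarrow t=\log s$, this yields
\[
\calZ_{(0,\8)}[w(\,\cdot\,,\tau_{0})-w(\,\cdot\,,\tau_{1})]=\#\{t\in\R:\ z_{0}(t)=z_{0}(t+c)\},\qquad
\calZ_{(0,\8)}[w(\,\cdot\,,\tau_{0})-w^{*}]=\#\{t\in\R:\ z_{0}(t)=\bar z\}.
\]

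Next I would record the basic properties of the orbit $z_{0}$. From the regular data one gets $z_{0}(t)\to-\8$ and $z_{0}'(t)\to 2$ as $t\to-\8$ (because $w(s,\tau_{0})\to\tau_{0}$ and $sw'(s,\tau_{0})\to 0$ as $s\downarrow 0$). The energy $E(t):=\tfrac12 z_{0}'(t)^{2}+e^{z_{0}(t)}-2(N-2)z_{0}(t)$ satisfies $E'=-(N-2)z_{0}'^{2}\le 0$; since $e^{z}-2(N-2)z$ is coercive, $E$ being non-increasing forces $z_{0},z_{0}'$ bounded forward in time, so $z_{0}$ is globally defined and, by LaSalle's invariance principle, $(z_{0},z_{0}')\to(\bar z,0)$ as $t\to+\8$. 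The linearization at $(\bar z,0)$ is $\phi''+(N-2)\phi'+2(N-2)\phi=0$ with characteristic roots $\mu_{\pm}=\tfrac12\big(-(N-2)\pm\sqrt{(N-2)(N-10)}\big)$: a stable focus when $3\le N\le 9$ and a stable node when $N\ge 10$. This is exactly where the dimension splits.

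For $N\ge 10$ I would prove the stronger statement that the whole orbit is trapped in
\[
\Omega:=\{(z,z'):\ z<\bar z,\ 0<z'<\kappa(\bar z-z)\},\qquad \kappa:=-\mu_{+}=\tfrac12\big((N-2)-\sqrt{(N-2)(N-10)}\big)>0 .
\]
The boundary check has three pieces: on $\{z'=0,\ z<\bar z\}$ one has $z''=2(N-2)-e^{z}>0$; on $\{z'=\kappa(\bar z-z)\}$, using $\kappa^{2}-(N-2)\kappa+2(N-2)=0$ one reduces $\tfrac{d}{dt}\big(z'-\kappa(\bar z-z)\big)$ to $e^{\bar z}\big(1-(\bar z-z)-e^{-(\bar z-z)}\big)<0$ (because $e^{-x}>1-x$ for $x>0$); and near $t=-\8$ the orbit already lies in $\Omega$. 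Hence $\Omega$ is forward invariant, so $z_{0}(t)<\bar z$ and $z_{0}'(t)>0$ for all $t$. Then $z_{0}$ is strictly increasing, $z_{0}(t)<z_{0}(t+c)$ and $z_{0}(t)<\bar z$ for every $t$, so both differences are sign-definite and $\calZ_{(0,\8)}[w(\,\cdot\,,\tau_{0})-w(\,\cdot\,,\tau_{1})]=\calZ_{(0,\8)}[w(\,\cdot\,,\tau_{0})-w^{*}]=0$, which is part (ii). This step is the crux of the whole argument: mere convergence to a stable node does not by itself rule out an overshoot of $z=\bar z$, so the quantitative invariant region $\Omega$ is needed, and the inequality on its slanted boundary is precisely the place where $\sqrt{(N-2)(N-10)}$ must be real.

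For $3\le N\le 9$, convergence of $(z_{0},z_{0}')$ to the hyperbolic focus $(\bar z,0)$ forces the orbit to wind around it infinitely often, so $z_{0}(t)-\bar z$ changes sign infinitely often and $\calZ_{(0,\8)}[w(\,\cdot\,,\tau_{0})-w^{*}]=\8$. For the two regular solutions, set $\psi:=z_{0}(\,\cdot\,)-z_{0}(\,\cdot\,+c)$; subtracting the two copies of the autonomous equation and applying the mean value theorem to $e^{z}$ gives $\psi''+(N-2)\psi'+b(t)\psi=0$ with $b(t)>0$ and $b(t)\to 2(N-2)$ as $t\to\8$, so near $+\8$ this is a small perturbation of the oscillatory equation $\phi''+(N-2)\phi'+2(N-2)\phi=0$. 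Since $\psi\not\equiv 0$ on any half-line $[T,\8)$ — otherwise $z_{0}$ would be $c$-periodic, hence $\equiv\bar z$, near $+\8$, forcing $z_{0}\equiv\bar z$ by uniqueness for the ODE, contradicting $z_{0}(-\8)=-\8$ — a Sturm comparison (say with $\phi''+(N-2)\phi'+(2(N-2)-\varepsilon)\phi=0$, still oscillatory for small $\varepsilon$ when $3\le N\le 9$) gives infinitely many zeros of $\psi$, i.e. $\calZ_{(0,\8)}[w(\,\cdot\,,\tau_{0})-w(\,\cdot\,,\tau_{1})]=\8$, which is part (i). This last analysis is routine once the convergence to the focus is established; the genuine difficulty is confined to the node case $N\ge 10$.
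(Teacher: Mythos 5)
Your proof is correct. The paper itself supplies no argument for Proposition \ref{S5P2} and simply points to the literature (\cite{JS02,JL73}); what you have written out is a self-contained rendition of exactly that classical Emden--Fowler/phase-plane argument, and all the nontrivial steps check out: the reduction to the autonomous equation $z''+(N-2)z'+e^z-2(N-2)=0$, the identification of $w^*$ with the equilibrium $\bar z=\log 2(N-2)$, the energy/LaSalle convergence, the eigenvalue dichotomy at $N=10$, and the forward-invariant wedge $\Omega$ with $\kappa=-\mu_+$ (the slanted-boundary inequality reduces cleanly to $1-x-e^{-x}<0$). Two small points you may want to spell out if this were to stand alone: for the Sturm comparison in part (i) one should first remove the damping via $\psi=e^{-(N-2)t/2}\Psi$, reducing to $\Psi''+\bigl(b(t)-\tfrac{(N-2)^2}{4}\bigr)\Psi=0$ with coefficient tending to $-\tfrac{(N-2)(N-10)}{4}>0$; and the statement that convergence to a hyperbolic focus forces infinite winding deserves a one-line justification (e.g.\ the angular speed in polar coordinates about $(\bar z,0)$ is bounded away from zero near the equilibrium because the linearization has nonzero imaginary part).
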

See \cite{JS02,JL73} for example.

\begin{proof}[Proof of Theorem~\ref{THB}]
We consider the case $q>1$.
The cases $q=q_S$, $q_{JL}<q<q_S$, and $q\le q_{JL}$ correspond to $p=p_S$, $p_S<p<p_{JL}$, and $p\ge p_{JL}$, respectively.
The conclusion of Theorem~\ref{THB} follows from Proposition~\ref{S5P1}.
We consider the case $q=1$.
The cases $q_{JL}<q<q_S$ and $q\le q_{JL}$ correspond to $3\le N\le 9$ and $N\ge 10$, respectively.
The conclusion of Theorem~\ref{THB} follows from Proposition~\ref{S5P2}.
Note that the case $q=q_S$ corresponds to $N=2$.
The proof is complete.
\end{proof}

\section{Proof of Theorem~\ref{THA}}
\begin{proof}[Proof of Theorem~\ref{THA}]
(i) follows from Lemma~\ref{S3L1}.

(ii) Let $\tu(s):=F^{-1}[\lambda^{-2}F[u(r,\rho)]]$, $\tu^*(s):=F^{-1}[\lambda^{-2}F[u^*(r)]]$, $s:=r/\lambda$, and $\lambda:=\sqrt{F(\rho)/F(1)}$.
By Lemmas~\ref{S4L1} and \ref{S4L2} we see that as $\rho\to\8$,
\begin{equation}\label{PTHAE1}
\tu(s)\to v(s,1)\ \ \textrm{in}\ \ C_{loc}[0,\8),\\
\end{equation}
\begin{equation}\label{PTHAE2}
\tu^*(s)\to v^*(s)\ \ \textrm{in}\ \ C_{loc}(0,\8).
\end{equation}
Since $q_{JL}<q<q_S$, Theorem~\ref{THB}~(ii) says that
\begin{equation}\label{PTHAE3}
\calZ_{(0,\8)}[v(\,\cdot\,,1)-v^*(\,\cdot\,)]=\8.
\end{equation}
Let $r_1>0$ be given by the assumption in Theorem~\ref{THA}.
Since the same transformation is applied to $\tu(s)$ and $\tu^*(s)$, we see that $\calZ_{(0,r_1)}[u(\,\cdot\,,\rho)-u^*(\,\cdot\,)]=\calZ_{(0,r_1/\lambda)}[\tu(\,\cdot\,)-\tu^*(\,\cdot\,)]$.
For each $M>0$, there is $s_M>0$ and $\rho_M>0$ such that $\calZ_{(0,s_M)}[\tu(\,\cdot\,)-\tu^*(\,\cdot\,)]\ge M$ for $\rho>\rho_M$, because of (\ref{PTHAE1}), (\ref{PTHAE2}) and (\ref{PTHAE3}).
If $\rho>0$ is large, then $(0,s_M)\subset (0,r_1/\lambda)$, and hence
\[
\calZ_{(0,r_1)}[u(\,\cdot\,,\rho)-u^*(\,\cdot\,)]=\calZ_{(0,r_1/\lambda)}[\tu(\,\cdot\,)-\tu^*(\,\cdot\,)]\ge\calZ_{(0,s_M)}[\tu(\,\cdot\,)-\tu^*(\,\cdot\,)]\ge M.
\]
Since $M$ can be chosen arbitrarily large, $\calZ_{(0,r_1)}[u(\,\cdot\,,\rho)-u^*(\,\cdot\,)]\to\8$ as $\rho\to\8$.
\end{proof}
\section{Morse index of a singular solution}
\begin{proof}[Proof of Corollary~\ref{S1C0}]
(i) Because of (\ref{f2}), $f''(u)>0$ for large $u>0$, and hence $f(u)$ is convex for large $u$.
There is $r_2\in (0,r_0]$ such that $f(u)$ is convex for $u\in\{u^*(r)\in\R;\ 0<r<r_2\}$.
Here, $r_0$ is given in Theorem~\ref{THA}~(i).
It follows from Theorem~\ref{THA}~(ii) that, for each $n\ge 1$, there is a large $\rho>0$ such that $u^*(\,\cdot\,)-u(\,\cdot\,,\rho)$ has at least $2n+1$ zeros in $(0,r_2)$.
Then $\{z_i\}_{i=1}^{2n+1}$, $z_1<z_2<\cdots<z_{2n+1}$, denotes the first $2n+1$ zeros.
By $\phi_i(r)$, $i=1,2,\ldots,n$, we define
\[
\phi_i(r):=
\begin{cases}
u^*(r)-u(r,\rho) & \textrm{if}\ r\in(z_{2i},z_{2i+1}),\\
0 & \textrm{if}\ r\not\in(z_{2i},z_{2i+1}).
\end{cases}
\]
Since $f(u)$ is convex for large $u$ and $u(r,\rho)<u^*(r)$ for $r\in(z_{2i},z_{2i+1})$, we have that
\[
\frac{f(u^*(r))-f(u(r,\rho))}{u^*(r)-u(r,\rho)}<f'(u^*(r))\ \ \textrm{for}\ \ r\in(z_{2i},z_{2i+1}).
\]
Let
\[
V(r):=
\begin{cases}
\frac{f(u^*(r))-f(u(r,\rho))}{u^*(r)-u(r,\rho)} & \textrm{if}\ u^*(r)\neq u(r,\rho),\\
f'(u^*(r)) & \textrm{if}\ u^*(r)=u(r,\rho).
\end{cases}
\]
Since $\phi_i(z_{2i})=\phi_i(z_{2i+1})=0$, we have that
\begin{align}
\int_{B(r^*_0)}\left(|\nabla\phi_i|^2-f'(u^*)\phi_i^2\right)dx
&=\int_{\{z_{2i}<r<z_{2i+1}\}}\left(|\nabla\phi_i|^2-f'(u^*)\phi_i^2\right)dx\nonumber\\
&<\int_{\{z_{2i}<r<z_{2i+1}\}}\left(|\nabla\phi_i|^2-V\phi_i^2\right)dx\nonumber\\
&=-\int_{\{z_{2i}<r<z_{2i+1}\}}\phi_i\left(\Delta\phi_i+V\phi_i\right)dx\nonumber\\
&\quad+\int_{\{r=z_{2i+1}\}}\phi_i\frac{\partial}{\partial {\rm n}}\phi_id\sigma
+\int_{\{r=z_{2i}\}}\phi_i(-\frac{\partial}{\partial {\rm n}}\phi_i')d\sigma\nonumber\\
&=0,\label{PC0E0}
\end{align}
where ${\partial}/{\partial {\rm n}}$ denotes the outer normal derivative, and we use $\Delta\phi_i+V\phi_i=0$ in $\{z_{2i}<r<z_{2i+1}\}$.
Since the supports of $\phi_i$ and $\phi_j$, $j\neq i$, are disjoint and $n$ can be chosen arbitrarily large, (\ref{PC0E0}) indicates that $m(u^*)=\infty$.

(ii)  Let $\e>0$ be fixed.
We determine $\e$ later.
Since $\lim_{u\to\8}F(u)f'(u)=q$ (See (\ref{L1E3+})), we see that $F(u^*(r))f'(u^*(r))<q+\e$ for small $r>0$.
By Theorem~\ref{THA}~(i) we have $F(u^*(r))=k^{-1}r^2(1+\theta(r))$.
Therefore, $f'(u^*(r))<2(N-2q)(q+\e)/(r^2(1+\theta(r)))$ for small $r>0$.
Since $(q+\e)/(1+o(1))<q+2\e$ for small $\e>0$, there is $r_3>0$ such that
\begin{equation}\label{PC0E-0}
f'(u^*(r))<\frac{2(N-2q)(q+2\e)}{r^2}\ \ \textrm{for}\ \ 0<r<r_3.
\end{equation}
Since $q<q_{JL}$, we see that $2(N-2q)q<(N-2)^2/4$.
We can choose $\e>0$ small enough so that $2(N-2q)(q+2\e)<(N-2)^2/4$.
Then there is $\delta\in(0,1)$ such that
\begin{equation}\label{PC0E-1}
2(N-2q)(q+2\e)=(1-\delta)\frac{(N-2)^2}{4}.
\end{equation}
We define
\[
\chi_0(r):=
\begin{cases}
1 & \textrm{if}\ 0\le r<r_3/2,\\
0 & \textrm{if}\ r_3<r,
\end{cases}
\]
where $0\le\chi_0(r)\le 1$ and $\chi_0(r)\in C^1$.
Let $\chi_1(r):=1-\chi_0(r)$.
By (\ref{PC0E-0}) and (\ref{PC0E-1}) we see that for $\phi\in H^1_{0,rad}(B(r^*_0))$,
\begin{align}
\int_{B(r^*_0)}\left(\left|\nabla\phi\right|^2-f'(u)\phi^2\right)dx
&=\int_{B(r^*_0)}\left\{(1-\delta)\left|\nabla\phi\right|^2-\chi_0f'(u^*)\phi^2\right\}dx\nonumber\\
&\quad+\int_{B(r^*_0)}\left\{\delta\left|\nabla\phi\right|^2-\chi_1f'(u^*)\phi^2\right\}dx\nonumber\\
&\ge (1-\delta)\int_{B(r^*_0)}\left(\left|\nabla\phi\right|^2-\frac{(N-2)^2}{4}\phi^2\right)dx\nonumber\\
&\quad+\delta\int_{B(r^*_0)}\left(\left|\nabla\phi\right|^2-\frac{\chi_1}{\delta}f'(u^*)\phi^2\right)dx\nonumber\\
&\ge\delta\int_{B(r^*_0)}\left(\left|\nabla\phi\right|^2-\frac{\chi_1}{\delta}f'(u^*)\phi^2\right)dx,\label{PC0E-2}
\end{align}
where we used Hardy's inequality.
Since $|\chi_1f'(u^*)/\delta|$ is bounded on $B(r^*_0)$, the operator $-\Delta-\chi_1f'(u^*)/\delta$ with the Dirichlet boundary condition has at most finitely many negative eigenvalues, i.e., $\dim X_1<\8$.
Here,
\[
X_1:=\left\{\phi\in H^1_{0,rad}(B(r^*_0));\ \int_{B(r^*_0)}\left(\left|\nabla\phi\right|^2-\frac{\chi_1}{\delta}f'(u^*)\phi^2\right)dx<0\right\}\cup\{0\}.
\]
We prove $m(u^*)<\8$ by contradiction.
Suppose that $m(u^*)=\8$.
Then
\[
\dim\left(\left\{\phi\in H^1_{0,rad}(B(r^*_0));\ \int_{B(r^*_0)}\left(\left|\nabla\phi\right|^2-f'(u^*)\phi^2\right)dx<0\right\}\cup\{0\}\right)=\8.
\]
Because of (\ref{PC0E-2}), we see that $\dim X_1=\8$.
We obtain a contradiction.
Thus, $m(u^*)<\8$.
\end{proof}

\section{Bifurcation diagram}
Suppose that $N\ge 3$ and (\ref{f1'}) and (\ref{f2}) with $q<q_S$ hold.
The problem (\ref{BP}) is equivalent to the following problem
\[
\begin{cases}
U''+\frac{N-1}{R}U'+\mu f(U)=0 & 0<R<1,\\
U(1)=0,\\
U(R)>0, & 0<R<1.
\end{cases}
\]
Let $u(r):=U(R)$ and $r:=\sqrt{\mu}R$.
Then $u$ satisfies
\[
\begin{cases}
u''+\frac{N-1}{r}u'+f(u)=0, & 0<r<\sqrt{\mu},\\
u(\sqrt{\mu})=0,\\
u(r)>0, & 0<r<\sqrt{\mu}.
\end{cases}
\]
We consider (\ref{S1E2}).
Let $u(r,\rho)$ be the solution of (\ref{S1E2}).
Because of (\ref{f1'}), there is $\delta>0$ such that
\begin{equation}\label{S8E1}
f(u)>\delta\ \ \textrm{for}\ \ u\ge 0.
\end{equation}
It is well known that $u(\,\cdot\,,\rho)$ has the first positive zero $r_0(\rho)$.
Let $\mu(\rho):=r_0(\rho)^2$.
Because of Theorem~\ref{THA}~(i), (\ref{S1EQ}) has a singular solution $u^*(r)$ near $r=0$.
We extend the domain of $u^*$.
By (\ref{S8E1}) we see that $u^*(r)$ also has the first positive zero $r^*_0$.
Then $(\mu^*,U^*(R)):=((r^*_0)^2,u^*(r^*_0R))$ is a singular solution of (\ref{BP}).

We extend the domain of $f(u)$.
We can assume that $f\in C^1(\R)$, $f(u)>0$ for $u\in\R$, and $f(u)=\delta/2$ for $u<-1$.
Then (\ref{f12}) holds, and hence Theorem~\ref{THA} is applicable.
\begin{lemma}\label{S8L1}
Suppose that $N\ge 3$ and (\ref{f1'}) and (\ref{f2}) hold.
Let $u(r,\rho)$ be the solution of (\ref{S1E2}) and $\mu^*:=(r_0^*)^2$.
Let $\mu(\rho)$ be as above.
If $q_{JL}<q<q_S$, then $\mu(\rho)$ oscillates infinitely many times around $\mu^*$ as $\rho\to\8$.
\end{lemma}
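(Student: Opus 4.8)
The plan is to transfer the infinite-intersection information of Theorem~\ref{THA}~(ii) into an oscillation statement for the map $\rho\mapsto\mu(\rho)$ via the first-zero function. Recall that $\mu(\rho)=r_0(\rho)^2$ where $r_0(\rho)$ is the first positive zero of $u(\,\cdot\,,\rho)$, and $\mu^*=(r_0^*)^2$ where $r_0^*$ is the first positive zero of $u^*$. By (\ref{S8E1}), $f(u)>\delta>0$ for $u\ge 0$, so both $u(\,\cdot\,,\rho)$ and $u^*$ genuinely have a first positive zero; moreover this lower bound forces every solution to be strictly concave while positive, so that $u(\,\cdot\,,\rho)$ and $u^*$ each cross zero transversally at their first zero, and the zeros depend continuously on $\rho$ for $\rho$ large. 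The idea is that the number of intersections of $u(\,\cdot\,,\rho)$ with $u^*$ on an interval containing $r_0^*$ controls how many times $r_0(\rho)$ winds around $r_0^*$.

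First I would fix $r_1\in(0,r_0]$ large enough that $r_0^*<r_1$; this is possible because $r_0^*$ is a fixed positive number and, although Theorem~\ref{THA}~(i) only produces $u^*$ on $(0,r_0)$, the extension of the domain of $u^*$ past its first zero (carried out just before the statement, using (\ref{S8E1})) makes $r_0^*$ well defined, and we may enlarge $r_0$ if necessary so that $u^*$ is still a classical solution up to slightly beyond $r_0^*$. By Theorem~\ref{THA}~(ii), $\calZ_{(0,r_1)}[u(\,\cdot\,,\rho)-u^*(\,\cdot\,)]\to\8$ as $\rho\to\8$. Next, I would argue that a large intersection count on $(0,r_1)$ forces many intersections to accumulate in a small left-neighborhood of $r_0^*$: away from $r=0$ and away from $r=r_0^*$ the two functions are uniformly separated for $\rho$ large (near $r=0$ because $u(\lambda s,\rho)\to\infty$ forces $u(\,\cdot\,,\rho)$ to stay below the more singular $u^*$; in a fixed compact subinterval of $(0,r_0^*)$ bounded away from both endpoints one uses a Sturm-type comparison to bound intersections), so the intersections must cluster near $r_0^*$. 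Each pair of consecutive intersection points $r_0^*-$ where $u(\,\cdot\,,\rho)-u^*$ changes sign corresponds, via the transversality of $u^*$ at $r_0^*$ and the concavity of $u(\,\cdot\,,\rho)$, to $r_0(\rho)$ lying alternately to the left and to the right of $r_0^*$; hence $r_0(\rho)$, and therefore $\mu(\rho)=r_0(\rho)^2$, crosses the value $\mu^*$ at least once between each such pair. Since the number of such pairs tends to infinity with $\rho$, $\mu(\rho)$ oscillates around $\mu^*$ infinitely many times as $\rho\to\8$.

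To make the clustering argument precise I would use the rescaling $\tu(s)=F^{-1}[\lambda^{-2}F[u(r,\rho)]]$, $s=r/\lambda$, $\lambda=\sqrt{F(\rho)/F(1)}$ from (\ref{S4E1}), under which intersections of $u(\,\cdot\,,\rho)$ with $u^*$ on $(0,r_1)$ match intersections of $\tu$ with $\tu^*$ on $(0,r_1/\lambda)$, and then invoke Lemmas~\ref{S4L1} and \ref{S4L2}: $\tu\to v(\,\cdot\,,1)$ in $C_{loc}[0,\8)$ and $\tu^*\to v^*$ in $C_{loc}(0,\8)$, with $\calZ_{(0,\8)}[v(\,\cdot\,,1)-v^*(\,\cdot\,)]=\8$. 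This is exactly the mechanism already used in the proof of Theorem~\ref{THA}~(ii), so I would quote that proof: for each $M$ there is $s_M$ with $\calZ_{(0,s_M)}[\tu-\tu^*]\ge M$ for $\rho$ large, and $(0,s_M)\subset(0,r_1/\lambda)$. The main obstacle is the bookkeeping that converts "many sign changes of $u(\,\cdot\,,\rho)-u^*$ near $r_0^*$" into "many sign changes of $\mu(\rho)-\mu^*$": one must rule out that the extra intersections are all to the left of $r_0^*$ without the curve $r_0(\rho)$ ever actually passing through $r_0^*$. Here I would use that at its first zero $u(\,\cdot\,,\rho)$ has strictly negative derivative (by concavity and (\ref{S8E1})), so $r_0(\rho)$ depends continuously on $\rho$ and, by an intermediate-value argument, whenever $u(\,\cdot\,,\rho)-u^*$ changes sign on one side of $r_0^*$ as $\rho$ varies continuously, $r_0(\rho)$ must sweep past $r_0^*$; iterating over the infinitely many sign changes produced as $\rho\to\8$ yields the infinitely many crossings of $\mu^*$.
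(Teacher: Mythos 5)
Your central geometric claim is wrong, and it undermines the rest of the argument. You assert that the many intersections of $u(\,\cdot\,,\rho)$ with $u^*$ on $(0,r_1)$ "accumulate in a small left-neighborhood of $r_0^*$." In fact they do the opposite: they accumulate at $r=0$. The very rescaling you invoke shows this — the zeros of $v(\,\cdot\,,1)-v^*$ occur at fixed points $s_1<s_2<\cdots$ of $(0,\8)$, and they pull back (via $r=\lambda s$ with $\lambda=\sqrt{F(\rho)/F(1)}\to 0$) to points $\lambda s_1,\lambda s_2,\ldots$ that collapse to the origin as $\rho\to\8$. So for fixed large $\rho$ the intersections are all in a tiny neighborhood of $r=0$, well away from the fixed number $r_0^*$, and the sentence "each pair of consecutive intersection points $\ldots$ corresponds $\ldots$ to $r_0(\rho)$ lying alternately to the left and to the right of $r_0^*$" has no content: those intersection points belong to a single fixed $\rho$, while the position of $r_0(\rho)$ relative to $r_0^*$ is a single fact, not an alternating sequence. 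Your closing appeal to the intermediate value theorem ("whenever $u(\,\cdot\,,\rho)-u^*$ changes sign $\ldots$ $r_0(\rho)$ must sweep past $r_0^*$") is indeed the right instinct, but you have not supplied the link that forces such sign changes to actually happen infinitely often as $\rho$ varies.

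The paper supplies that link with a continuity argument on the intersection count, not a clustering argument. Set $z(\rho):=\calZ_I[u(\,\cdot\,,\rho)-u^*(\,\cdot\,)]$ on the $\rho$-dependent interval $I=(0,\min\{r_0(\rho),r_0^*\})$. The zeros of $u-u^*$ are simple and move continuously in $\rho$, so $z(\rho)$ can change only when a zero crosses $\partial I$. Near $r=0$ the difference is $-\8$, so no zero enters there. If $\mu(\rho)$ stayed strictly below $\mu^*$ for $\rho>\rho_0$, then the right endpoint is $r_0(\rho)$ and there $u-u^*=-u^*(r_0(\rho))<0$, so no zero ever enters; hence $z(\rho)$ is bounded. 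But Lin's lower bound (Proposition~\ref{S4P1}) keeps $r_0(\rho)$ away from $0$, so some fixed $(0,r_1)$ lies inside $I$ for $\rho$ large, and Theorem~\ref{THA}~(ii) forces $z(\rho)\ge\calZ_{(0,r_1)}[u-u^*]\to\8$ — contradiction. The case $\mu(\rho)>\mu^*$ eventually is symmetric. A zero can enter $I$ from the right only at a parameter value where $r_0(\rho)=r_0^*$, i.e.\ $\mu(\rho)=\mu^*$, and since such entries must occur infinitely often, $\mu(\rho)$ oscillates around $\mu^*$. This is the machinery your proposal is missing: the count $z(\rho)$ on the moving interval $I$, the observation that its only mechanism of change is a boundary crossing, and the fact that a boundary crossing on the right is exactly the event $\mu(\rho)=\mu^*$.
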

\begin{proof}
Let $z(\rho):=\calZ_{(0,\min\{r_0(\rho),r_0^*\})}[u(\,\cdot\,,\rho)-u^*(\,\cdot\,)]$ and let $I:=(0,\min\{r_0(\rho),r_0^*\})$.
It is clear that $\{u(r,\rho)-u^*(\,\cdot\,)=0\}$ does not have an accumulation point.
Hence, $z(\rho)<\infty$.
We see that each zero of $u(r,\rho)-u^*(r)$ is simple and $u(r,\rho)-u^*(r)$ is a $C^1$-function of $(r,\rho)$.
It follows from the implicit function theorem that each zero of $u(r,\rho)-u^*(r)$ continuously depends on $\rho$.
Because $z(\rho)$ does no change in a neighborhood of each fixed $\rho$, $z(\rho)$ does not change if another zero does not enter $I$ from $\partial I$ or a zero in $I$ goes out of $I$.
We prove the conclusion of the lemma by contradiction.
Suppose that there is $\rho_0>0$ such that $\mu(\rho)<\mu^*$ for $\rho>\rho_0$.
Let $\tilde{r}:=\min\{r_0(\rho_0),r_0^*\}$.
We see that $u(0,\rho)-u^*(0)=-\infty$ and $u(\tilde{r},\rho)-u^*(\tilde{r})<0$.
Thus, another zero cannot enter, and $z(\rho)$ is bounded for $\rho>0$.
This contradicts Theorem~\ref{THA}~(ii).
Similarly, we obtain the contradiction in the case where $\mu(\rho)>\mu^*$ for $\rho>\rho_0$.
As a consequence, $\mu(\rho)$ has to oscillate infinitely many times around $\mu^*$ as $\rho\to\8$.
\end{proof}
Corollary~\ref{S1C1}~(i) immediately follows from Lemma~\ref{S8L1}.\\

We prove Corollary~\ref{S1C1}~(ii).
We apply a generalized Cole-Hopf transformation, which is mentioned in Section~1, to $u(r,\rho)$.
Let
\begin{equation}\label{S8E2}
\tilde{u}(r,\sigma):=F_q^{-1}[F[u(r,\rho)]]\ \ \textrm{and}\ \ \sigma:=F_q^{-1}[F(\rho)].
\end{equation}
Note that $\sigma$ is strictly increasing in $\rho$.
The function $\tilde{u}$ satisfies
\[
\Delta\tilde{u}+f_q(\tilde{u})+(F(u)f'(u)-q)|\nabla\tilde{u}|^2=0.
\]
First we show the following:
\begin{lemma}\label{S8L2}
Suppose that $N\ge 3$ (\ref{f1'}) and (\ref{f2}) with $q\le q_{JL}$ hold, and $F(u)f'(u)\ge q$ for $u\ge 0$.
Then,
\begin{equation}\label{S8L2E0}
\tilde{u}(r,\sigma)\le F_q^{-1}[k^{-1}r^2]\ \ \textrm{for}\ \ r\ge 0.
\end{equation}
\end{lemma}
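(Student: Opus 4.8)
The plan is to read \eqref{S8L2E0} as a comparison between $\tilde{u}$ and the singular solution of the limit equation $\Delta w+f_q(w)=0$. First I would note that, since $F(u)f'(u)\ge q$ for $u\ge 0$, the equation $\Delta\tilde{u}+f_q(\tilde{u})+(F(u)f'(u)-q)|\nabla\tilde{u}|^2=0$ satisfied by $\tilde{u}$ (recorded just above) gives
\[
\Delta\tilde{u}+f_q(\tilde{u})=-\big(F(u)f'(u)-q\big)|\nabla\tilde{u}|^2\le 0,
\]
so that $\tilde{u}$ is a positive radial supersolution of $\Delta w+f_q(w)=0$ on $(0,r_0(\rho))$, bounded near the origin with $\tilde{u}(0)=\sigma$ and $\tilde{u}'(0)=0$. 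On the other hand, by Lemma~\ref{S5L0} applied to $v^*$ from \eqref{SSV}, the function $w^*(r):=F_q^{-1}[F[v^*(r)]]=F_q^{-1}[k^{-1}r^2]$ is the radial singular solution of $\Delta w+f_q(w)=0$; hence \eqref{S8L2E0} is precisely the assertion $\tilde{u}\le w^*$. (Equivalently, applying the decreasing bijections $F$ and $F^{-1}$, \eqref{S8L2E0} reads $u(r,\rho)\le F^{-1}[k^{-1}r^2]$ downstairs.)

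To obtain this inequality I would compare the supersolution $\tilde{u}$ with $w^*$ by means of the comparison principle of Gui~\cite{G96,GNW92}, which applies in the stable range $q\le q_{JL}$. The key point is that $q\le q_{JL}$ is exactly the Hardy bound $f_q'(w^*(r))\le (N-2)^2/(4r^2)$: one computes that $r^2 f_q'(w^*(r))$ equals $\tfrac{2p}{p-1}\big(N-2-\tfrac{2}{p-1}\big)$ if $q>1$ and $2(N-2)$ if $q=1$, and in either case $q\le q_{JL}$ is equivalent to this quantity being $\le (N-2)^2/4$. This makes $w^*$ a stable singular solution, realized as the monotone increasing limit $w^*=\lim_{\tau\to\infty}w(\cdot,\tau)$ of the regular solutions $w(\cdot,\tau)$ of $\Delta w+f_q(w)=0$; these are globally positive, strictly increasing in $\tau$, and do not intersect $w^*$ by Propositions~\ref{S5P1}(iii) and \ref{S5P2}(ii). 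Comparing $\tilde{u}$ against this family and using the absence of oscillation in the stable range then yields $\tilde{u}\le w^*$, i.e.\ \eqref{S8L2E0}.

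The main obstacle is precisely this last comparison step. Since $f_q$ is increasing and convex, the operator obtained by linearizing $\Delta w+f_q(w)=0$ at $w^*$ has a potential of the ``wrong'' sign, so no elementary maximum principle applies and one cannot simply deduce $\tilde{u}\le w^*$ from the boundary ordering at the origin; it is the non-oscillation encoded in the Hardy bound (equivalently $q\le q_{JL}$) that prevents $\tilde{u}$ from crossing above $w^*$. Making this rigorous — controlling the behaviour near $r=0$ of $\tilde{u}$, of $w^*$, and of the approximating family $w(\cdot,\tau)$, and running the comparison uniformly in $\tau$ — is the heart of the matter, and is carried out exactly as in the corresponding steps of \cite{KW16,Mi14,Mi15,MN16}.
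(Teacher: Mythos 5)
Your overall outline is the right one — $\tilde{u}$ is a supersolution of $\Delta w+f_q(w)=0$, $F_q^{-1}[k^{-1}r^2]$ is its singular solution, $q\le q_{JL}$ gives non-intersection of regular solutions with the singular one, and a Gui-type comparison should squeeze $\tilde{u}$ below $w^*$. But you stop exactly where the lemma's content lies: you write that the comparison of the supersolution $\tilde{u}$ against the family $w(\cdot,\tau)$ ``is the heart of the matter, and is carried out exactly as in \cite{KW16,Mi14,Mi15,MN16}.'' That step is the lemma; deferring it to references leaves a genuine gap.

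The paper's proof actually carries it out by a concrete Sturm-type integral identity using \emph{two} auxiliary regular solutions. Fix $\alpha_1>\alpha_0>\sigma$ and let $\tilde{u}_0,\tilde{u}_1$ solve the clean equation $\Delta w+f_q(w)=0$ with initial values $\alpha_0,\alpha_1$. By Propositions~\ref{S5P1}(iii)/\ref{S5P2}(ii) these do not intersect, so $w_1:=\tilde{u}_1-\tilde{u}_0>0$ everywhere and solves $\Delta w_1+V_1w_1=0$ with $V_1$ the divided difference of $f_q$. To show $\tilde{u}<\tilde{u}_0$ for all $r>0$ one argues by contradiction: at a first touching radius $r_0$, set $w_0:=\tilde{u}_0-\tilde{u}>0$ on $B(r_0)$, which solves $\Delta w_0+V_0w_0=(F(u)f'(u)-q)|\nabla\tilde{u}|^2\ge0$. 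Strict convexity of $f_q$ and the ordering $\tilde{u}<\tilde{u}_0<\tilde{u}_1$ give $V_1>V_0$, and the Green/Wronskian identity
\[
\int_{B(r_0)}\bigl(w_0\Delta w_1-w_1\Delta w_0\bigr)\,dx=\omega_N r_0^{N-1}\bigl(w_0(r_0)w_1'(r_0)-w_1(r_0)w_0'(r_0)\bigr)\ge0
\]
forces a sign contradiction, because the left side is strictly negative. Then $\tilde{u}<\tilde{u}_0<F_q^{-1}[k^{-1}r^2]$ by non-intersection of $\tilde{u}_0$ with the singular solution. Note also that the Hardy bound $f_q'(w^*)\le(N-2)^2/(4r^2)$, which you emphasize as the key mechanism, does not actually appear in the proof of this lemma; it is used later, in the proof of Corollary~\ref{S1C1}(ii), to show the linearized eigenvalue is positive. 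What the lemma really uses from $q\le q_{JL}$ is only the non-intersection of regular solutions (and of regular with singular), feeding into the Gui-type two-solution test. Your plan via a monotone increasing family $w(\cdot,\tau)\uparrow w^*$ is in the same spirit, but without the explicit mechanism (the auxiliary positive solution $w_1$ of the linearized-type equation and the integration by parts) the ``no crossing'' claim for the supersolution $\tilde{u}$ is not justified.
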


\begin{proof}
Let $\alpha_1>\alpha_0(>\sigma)$ and let $\tilde{u}_i$, $i=0,1$, be the solution of the problem
\[
\begin{cases}
\tilde{u}_i''+\frac{N-1}{r}\tilde{u}_i'+f_q(\tilde{u}_i')=0, & r>0,\\
\tilde{u}_i(0)=\alpha_i,\\
\tilde{u}_i'(0)=0.
\end{cases}
\]
Since $q\le q_{JL}$, Propositions~\ref{S5P1} and \ref{S5P2} say that $\tilde{u}_1(r)>\tilde{u}_0(r)$ for $r\ge 0$.
Let $w_1(r):=\tilde{u}_1(r)-\tilde{u}_0(r)$.
Then
\[
\begin{cases}
\Delta w_1+V_1w_1=0 & \textrm{in}\ \R^N,\\
w_1>0 & \textrm{in}\ \R^N,
\end{cases}
\]
where
\[
V_1:=\frac{f_q(\tilde{u}_1(r))-f_q(\tilde{u}_0(r))}{\tilde{u}_1(r)-\tilde{u}_0(r)}.
\]
We show by contradiction that
\begin{equation}\label{S8L2E1}
\tilde{u}(r,\sigma)<\tilde{u}_0(r)\ \ \textrm{for}\ \ r>0.
\end{equation}
Suppose the contrary, i.e., there is $r_0>0$ such that $\tilde{u}(r,\sigma)<\tilde{u}_0(r)$ for $0<r<r_0$ and $\tilde{u}(r_0,\sigma)=\tilde{u}_0(r_0)$.
Let $w_0(r):=\tilde{u}_0(r)-\tilde{u}(r,\sigma)$.
Then
\[
\begin{cases}
\Delta w_0+V_0w_0=(F(u)f'(u)-q)|\nabla\tilde{u}|^2\ge 0 & \textrm{in}\ B(r_0),\\
w_0>0 & \textrm{in}\ B(r_0),
\end{cases}
\]
where $B(r_0)$ is an open ball with radius $r_0$ and
\[
V_0:=
\begin{cases}
\frac{f_q(\tilde{u}_0(r))-f_q(\tilde{u}(r,\sigma))}{\tilde{u}_0(r)-\tilde{u}(r,\sigma)} & \textrm{if}\ \tilde{u}_0(r)\neq\tilde{u}(r,\sigma),\\
f'_q(\tilde{u}_0(r)) & \textrm{if}\ \tilde{u}_0(r)=\tilde{u}(r,\sigma).
\end{cases}
\]
Since $f_q$ is strictly convex, we see that $V_1>V_0$.
Let $\omega_N$ denote the surface area of the unit sphere $\mathbb{S}^{N-1}\subset\R^N$.
Since $w_0'(r_0)\le 0$ and $w_0(r_0)=0$,
\begin{align*}
0&>-\int_{B(r_0)}(V_1-V_0)w_0w_1+w_1(F(u)f'(u)-q)|\nabla\tilde{u}|^2dx\\
&=\int_{B(r_0)}(w_0\Delta w_1-w_1\Delta w_0)dx\\
&=\omega_Nr_0^{N-1}(w_0(r_0)w_1'(r_0)-w_1(r_0)w_0'(r_0))\ge 0,
\end{align*}
which is a contradiction.
Thus, (\ref{S8L2E1}) holds.
Let $\tilde{u}^*(r):=F_q^{-1}[k^{-1}r^2]$.
Then $\tilde{u}^*$ is a singular solution of $\Delta\tilde{u}^*+f_q(\tilde{u}^*)=0$.
Since $q\le q_{JL}$, Propositions~\ref{S5P1} and \ref{S5P2} say that $\calZ_{(0,\infty)}[\tilde{u}_0(\,\cdot\,)-\tilde{u}^*(\,\cdot\,)]=0$, and hence $\tilde{u}_0(r)<\tilde{u}^*(r)$ for $r>0$.
Thus, (\ref{S8L2E0}) holds.
\end{proof}

\begin{proof}[Proof of Corollary~\ref{S1C1}]
Since $f(0)>0$, the bifurcation curve starts from $(0,0)$ and consists of minimal solutions near $(0,0)$.
Since $f'>0$ and $f''>0$, there is $\bar{\mu}>0$ such that either the curve blows up at $\bar{\mu}$ or the curve has a turning point at $\bar{\mu}$.
See \cite{BCMR96,CR75}.
We show by contradiction that the curve blows up at $\bar{\mu}$.
Suppose the contrary, i.e., $(\bar{\mu},U(R))$ is a turning point.
Let $u(r):=U(R)$ and $r:=\sqrt{\bar{\mu}}R$, and let $\tilde{u}$ be defined by (\ref{S8E2}).
Because of Lemma~\ref{S8L2}, $\tilde{u}(r,\sigma)\le F_q^{-1}[k^{-1}r^2]$.
Hence, $k^{-1}r^2\le F_q[\tilde{u}]=F[u]$.
We have that $u\le F^{-1}[k^{-1}r^2]$.
Using $f''\ge 0$ and the assumption of Corollary~\ref{S1C1}, we have
\begin{align*}
f'(u) &\le f'(F^{-1}[k^{-1}r^2])\\
&\le\frac{(N-2)^2}{8(N-2q)}\frac{1}{F[F^{-1}[k^{-1}r^2]]}\\
&=\frac{(N-2)^2}{4r^2}.
\end{align*}
By Hardy's inequality we have
\[
\int_{B_0}\left(|\nabla\phi|^2-f'(u)\phi^2\right)dx
\ge\int_{B_0}\left(|\nabla\phi|^2-\frac{(N-2)^2}{4r^2}\phi^2\right)dx>0
\]
for $\phi\in H^1_{0,rad}(B_0)\backslash\{0\}$.
Here, $B_0$ is a ball with radius the first positive zero of $u(\,\cdot\,)$.
This inequality indicates that the first eigenvalue of the problem
\[
\begin{cases}
\Delta\phi+f'(u)\phi=-\nu\phi,\\
\phi\in H^1_{0,rad}(B_0)
\end{cases}
\]
is strictly positive.
The first eigenvalue of the eigenvalue problem associated to $(\bar{\mu},U(R))$ is also strictly positive.
We have a contradiction, since $(\bar{\mu},U(R))$ is a turning point.
The proof is complete.
\end{proof}
\bigskip

\noindent
{\bf Acknowledgment.}
The author would like to thank Professor Y.~Fujishima for bringing the transformation (\ref{S1E3}) to his attention and sending him the paper~\cite{FI16}.
He thanks Professor Y.~Naito and Professor N.~Ioku for stimulating discussions.
He also thanks Professor H.~Kikuchi for sending him the paper~\cite{KW16}.
This work was supported by JSPS KAKENHI Grant Number 16K05225.


\end{document}